\documentclass[12pt]{amsart}

\usepackage{amsfonts}
\usepackage[centertags]{amsmath}
\usepackage{amssymb}
\usepackage{amsthm}
\usepackage{newlfont}
\usepackage{graphics}
\usepackage{graphicx}
\usepackage[T1]{fontenc}
\usepackage[all,arc,knot]{xy}
\usepackage{pstricks}
\xyoption{arc}
\usepackage[german]{varioref}
\usepackage{a4,latexsym}
\usepackage{url}

\hfuzz2pt \pagestyle{headings}

\setlength{\parindent}{0cm}

\newtheorem{theorem}{Theorem}
\newtheorem*{theoremA}{Theorem A}
\newtheorem*{theoremB}{Theorem B}
\newtheorem{lemma}{Lemma}[section]

\newtheorem{proposition}[lemma]{Proposition}
\newtheorem{cor}[lemma]{Corollary}

\theoremstyle{definition}
\newtheorem{definition}[lemma]{Definition}
\newtheorem{example}[lemma]{Example}
\newtheorem{remark}[lemma]{Remark}
\newcounter{diagramm}

\newcommand{\RR}{\mathbb{R}}
\newcommand{\NN}{\mathbb{N}}
\newcommand{\HH}{\mathbb{H}}
\newcommand{\ZZ}{\mathbb{Z}}
\newcommand{\CC}{\mathbb{C}}

\newcommand{\bpm}{\begin{pmatrix}}
\newcommand{\epm}{\end{pmatrix}}

\newcommand{\slzwei}[1]{\mbox{SL}_2(#1)}
\newcommand{\emslzwei}[1]{\mbox{\em SL}_2(#1)}
\newcommand{\Gammaquer}{\overline{\Gamma}}
\newcommand{\lcm}{\mbox{lcm}}
\renewcommand{\gcd}{\mbox{gcd}}

\newcommand{\emlcm}{\mbox{\em lcm}}
\newcommand{\mpde}{\mbox{mpde}}
\newcommand{\emmpde}{\mbox{\em mpde}}
\newcommand{\Cr}{Cr}

\newcommand{\omegam}[1]{\mathcal{H}_{#1}}

\begin{document}

\title[The deficiency of being a congruence group]{The deficiency of being a congruence 
group for Veech groups of origamis}

\author{Gabriela Weitze-Schmith\"usen}
\address{Institute for Algebra and Geometry, 
         Karlsruhe Institute of Technology (KIT)}
\email{weitze-schmithuesen@kit.edu}
\urladdr{http://www.math.kit.edu/iag3/~schmithuesen/en}
\thanks{partially supported by the Landesstiftung Baden-W\"urttemberg
within the project {\em With origamis to Teichm\"uller curves in moduli space}}
\date{\today}

\begin{abstract}
We study ``how far away'' a finite index subgroup $\Gamma$
of $\slzwei{\ZZ}$ is from being a congruence group.
For this we define its {\em deficiency of being a congruence 
group}. We show that the index of the image of $\Gamma$ in 
$\slzwei{\ZZ/n\ZZ}$ is the biggest, if $n$ is the general 
Wohlfahrt level. We furthermore show that the Veech groups
of origamis (or square-tiled surfaces) in $\omegam{2}(2)$
are far away from being congruence groups
and that in each genus one finds an infinite family of
origamis such that they are ``as far as possible'' from being 
a congruence group.
\end{abstract}
\maketitle

\section{Introduction}

{\em Teichm\"uller curves} and 
{\em Veech groups of translation surfaces} have
been intensively studied within the last ten years
since they were introduced
in Veech's  famous article \cite{Veech3}.
It was already proven by Veech himself that they
are discrete subgroups of $\slzwei{\RR}$
which are never cocompact. Beside this still only few general
statements are known. A very special class of translation
surfaces are defined by square-tiled surfaces which we also call
{\em origamis}. In this case the translation surface is especially handsome
and the Veech group 
is always a finite index subgroup of $\slzwei{\ZZ}$. The set of 
Teichm\"uller curves coming from origamis is dense in the corresponding
moduli space. This makes origamis an interesting class of translation surfaces
to study.
\\[2mm]
Recall that a subgroup of $\slzwei{\ZZ}$ is called a {\em congruence
group} if it contains one of the principal congruence groups $\Gamma(n)$
(see Section~\ref{section-deficiency}). 
In \cite{Sc1} it was proven that many congruence groups
occur as Veech groups. However, as soon as one fixes a
stratum in the moduli space $\omegam{g}$ of holomorphic unit area
differentials on closed Riemann surfaces of genus $g$, 
it seems that the Veech group of an origami is more likely to be a 
non-congruence group.
It was e.g. shown by Hubert and Leli\`evre in \cite{HL2} that in the
stratum $\omegam{2}(2)$ of holomorphic unit area differentials 
in genus 2  with one zero the Veech groups of
all but one origamis are non-congruence groups.\\

In this article we study ''how far from being a congruence group''
Veech groups of origamis are. For this we define in
Section~\ref{section-deficiency}  for a general 
finite index subgroup $\Gamma$ of $\slzwei{\ZZ}$ 
its {\em deficiency of being a congruence group}. 
As a key ingredient we use the Wohlfahrt level $l$ of $\Gamma$ 
(see Section~\ref{section-deficiency}). If $\Gamma$ 
is a congruence group, this equals the minimal congruence level.
In particular, one has that $\Gamma$ is a congruence group
if and only if the {\em level index} $e_{\Gamma} = [\slzwei{\ZZ/l\ZZ}:p_l(\Gamma)]$
is equal to the index  $d = [\slzwei{\ZZ}:\Gamma]$,
where $p_l\colon\slzwei{\ZZ} \to \slzwei{\ZZ/l\ZZ}$ is the natural
projection. In general the number $e_{\Gamma}$ can be an arbitrary
divisor of $d$ and we define $f_{\Gamma} = \frac{d}{e_{\Gamma}}$
to be the deficiency with respect to $l$. Thus the deficiency
is 1 if and only if $\Gamma$ is a congruence group and we may
take the deficiency as a measure of how far away $\Gamma$ is from being
a congruence group. We say that $\Gamma$ is a 
{\em totally non-congruence group} if its deficiency $f_{\Gamma}$ 
is the index $d$ or equivalently $e_{\Gamma}$ is 1, i.e. the projection $p_l$
is surjective.\\

We can define the numbers $e_{\Gamma} = e_{\Gamma, l}$ and 
$f_{\Gamma} = f_{\Gamma,l}$ in the same way
for arbitrary numbers $l$ possibly different from the Wohlfahrt level. Looking at the case
when $\Gamma$ is a congruence group, one however expects that $e_{\Gamma,l}$
becomes maximal (or equivalently the deficiency $f_{\Gamma,l}$ becomes minimal)
if $l$ is the Wohlfahrt level. We show in Section~\ref{section-deficiency} 
that this is indeed true.

\begin{theorem}[Proof in Section~\ref{section-deficiency}]\label{thm1}
Let $\Gamma$ be a finite index subgroup of $\slzwei{\ZZ}$ and $l \in \NN$.
The deficiency $f_{\Gamma,l}$ becomes minimal if $l$ 
is the  Wohlfahrt level of $\Gamma$.
\end{theorem}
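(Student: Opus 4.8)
The plan is to turn the statement into an assertion about the subgroups $\Gamma\cdot\Gamma(l)$ of $\slzwei{\ZZ}$. Since $p_l^{-1}(p_l(\Gamma)) = \Gamma\cdot\Gamma(l)$, one has $e_{\Gamma,l} = [\slzwei{\ZZ}:\Gamma\Gamma(l)]$ and hence $f_{\Gamma,l} = d/e_{\Gamma,l}$; so the theorem is equivalent to the statement that, as $l$ runs over $\NN$, the group $\Gamma\Gamma(l)$ is \emph{smallest} when $l$ equals the Wohlfahrt level $L$ of $\Gamma$. I will in fact show the sharper fact $\Gamma\Gamma(L)\subseteq\Gamma\Gamma(l)$ for \emph{every} $l$.

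First I would isolate the two inputs. (i) \emph{By the definition of the Wohlfahrt level}, $\Gamma$ contains the normal closure $\langle\langle T^L\rangle\rangle$ in $\slzwei{\ZZ}$ of $T^L = \bigl(\begin{smallmatrix}1&L\\0&1\end{smallmatrix}\bigr)$: every cusp width of $\Gamma$ divides $L$, and applying this to the cusp $g\infty$ for an arbitrary $g\in\slzwei{\ZZ}$ yields $gT^Lg^{-1}\in\Gamma$ (cf.\ Section~\ref{section-deficiency}). (ii) \emph{Stabilization lemma}: $\langle\langle T^L\rangle\rangle\cdot\Gamma(m) = \Gamma(L)$ for every multiple $m$ of $L$. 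Granting (i) and (ii), the theorem follows formally: given $l$, put $m = \lcm(l,L)$; then $\Gamma(m)\subseteq\Gamma(l)$ gives $\Gamma\Gamma(m)\subseteq\Gamma\Gamma(l)$, while by (i) and (ii) we have $\Gamma\Gamma(m)\supseteq\langle\langle T^L\rangle\rangle\Gamma(m) = \Gamma(L)$, so $\Gamma\Gamma(m)\supseteq\Gamma\Gamma(L)$; the reverse inclusion being clear from $\Gamma(m)\subseteq\Gamma(L)$, we get $\Gamma\Gamma(L) = \Gamma\Gamma(m)\subseteq\Gamma\Gamma(l)$, i.e.\ $e_{\Gamma,L}\ge e_{\Gamma,l}$ and $f_{\Gamma,L}\le f_{\Gamma,l}$.

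The content lies in the stabilization lemma (ii), which I would prove by induction on the number of prime factors of $m/L$; the base case $m=L$ is trivial because $\langle\langle T^L\rangle\rangle\subseteq\Gamma(L)$. The inductive step reduces to the auxiliary claim that $\langle\langle T^n\rangle\rangle\cdot\Gamma(np)\supseteq\Gamma(n)$ for every $n\ge1$ and every prime $p$: indeed, writing $m=m'p$ with $L\mid m'$, the induction hypothesis gives $\Gamma(L)=\langle\langle T^L\rangle\rangle\Gamma(m')$, and the claim applied to $n=m'$, together with $\langle\langle T^{m'}\rangle\rangle\subseteq\langle\langle T^L\rangle\rangle$ (valid since $L\mid m'$), gives $\Gamma(m')\subseteq\langle\langle T^L\rangle\rangle\Gamma(m'p)$; combining, $\Gamma(L)\subseteq\langle\langle T^L\rangle\rangle\Gamma(m'p)\subseteq\langle\langle T^L\rangle\rangle\Gamma(m')=\Gamma(L)$. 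To establish the auxiliary claim I reduce modulo $np$: I must check that the normal closure of the image of $T^n$ in $\slzwei{\ZZ/np\ZZ}$ contains the kernel $\Gamma(n)/\Gamma(np)$ of the reduction $\slzwei{\ZZ/np\ZZ}\to\slzwei{\ZZ/n\ZZ}$. If $p\nmid n$, the Chinese Remainder Theorem identifies this kernel with the factor $\slzwei{\ZZ/p\ZZ}$ of $\slzwei{\ZZ/n\ZZ}\times\slzwei{\ZZ/p\ZZ}$; there the image of $T^n$ is a nontrivial power of $\overline{T}$ which, $n$ being a unit modulo $p$, generates the same cyclic group as $\overline{T}$, whose normal closure is all of $\slzwei{\ZZ/p\ZZ}$. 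If $p^a$ divides $n$ exactly with $a\ge1$, the kernel is $\ker(\slzwei{\ZZ/p^{a+1}\ZZ}\to\slzwei{\ZZ/p^a\ZZ})$, which I identify with the additive group $\mathfrak{sl}_2(\mathbb{F}_p)$ of trace-zero $2\times2$ matrices over $\mathbb{F}_p$ carrying the adjoint action of $\slzwei{\mathbb{F}_p}$; under this identification the image of $T^n$ is a nonzero scalar multiple of the matrix unit $E_{12}$, and a short explicit computation shows that the $\slzwei{\mathbb{F}_p}$-conjugates of $E_{12}$ already $\mathbb{F}_p$-span $\mathfrak{sl}_2(\mathbb{F}_p)$.

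The one genuine (if small) obstacle I anticipate is this last span computation together with the identification of the module structure of $\Gamma(n)/\Gamma(np)$: one must verify that $E_{12}$ and two suitable conjugates of it form an $\mathbb{F}_p$-basis \emph{uniformly in $p$}, in particular at $p=2$, where $\mathfrak{sl}_2(\mathbb{F}_2)$ is not an irreducible $\slzwei{\mathbb{F}_2}$-module and so no shortcut via irreducibility of the adjoint representation is available. Fortunately the relevant computation does not see the characteristic. Everything else — rewriting $e_{\Gamma,l}$ in terms of $\Gamma\Gamma(l)$, the $\lcm$ trick, and the induction on prime factors — is formal manipulation with products of normal subgroups of $\slzwei{\ZZ}$.
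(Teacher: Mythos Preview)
Your argument is correct, and its overall shape matches the paper's: both reduce the statement to the equality $e_{\Gamma,L}=e_{\Gamma,m}$ for every multiple $m$ of the Wohlfahrt level $L$, equivalently $\Gamma\,\Gamma(L)=\Gamma\,\Gamma(m)$, which in turn boils down to $\Gamma(L)\subseteq\Gamma\,\Gamma(m)$. The difference is in how this last inclusion is justified. The paper applies Wohlfahrt's Theorem~A to the congruence group $\Gamma'=\Gamma\,\Gamma(m)$: since $\Gamma\subseteq\Gamma'$, the Wohlfahrt level $l'$ of $\Gamma'$ divides $L$, and Theorem~A then gives $\Gamma(L)\subseteq\Gamma(l')\subseteq\Gamma'$ in one line. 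You instead prove the inclusion from scratch via your ``stabilization lemma'' $\langle\langle T^L\rangle\rangle\,\Gamma(m)=\Gamma(L)$, which you establish by an induction on the prime factors of $m/L$ and an explicit analysis of $\ker\bigl(\slzwei{\ZZ/np\ZZ}\to\slzwei{\ZZ/n\ZZ}\bigr)$ as the adjoint module $\mathfrak{sl}_2(\mathbb{F}_p)$. This stabilization lemma is in fact \emph{equivalent} to Wohlfahrt's theorem (each implies the other by a two-line argument), so you are essentially re-deriving Theorem~A rather than quoting it. The paper's route is therefore much shorter given that Theorem~A is already stated; your route has the merit of being self-contained and of making transparent exactly which computation in $\slzwei{\ZZ/p^{a+1}\ZZ}$ is doing the work, including the uniform treatment at $p=2$ that you correctly flag.
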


In particular one has the following conclusion.

\begin{cor}
  For a finite index subgroup of $\emslzwei{\ZZ}$ with Wohlfahrt level $l$ we have:
    If $\Gamma$ is a totally non-congruence group, i.e. $e_{\Gamma,l} = 1$,
    then $\Gamma$ surjects to $\emslzwei{\ZZ/n\ZZ}$ for each natural number $n$.
\end{cor}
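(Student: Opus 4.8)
The plan is to deduce the corollary as a purely formal consequence of Theorem~\ref{thm1}, using only the elementary observation that $e_{\Gamma,n}$ always divides the index $d = [\slzwei{\ZZ}:\Gamma]$, so that $f_{\Gamma,n} = d/e_{\Gamma,n}$ is a positive integer with $1 \le f_{\Gamma,n} \le d$.

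First I would verify this divisibility. For $n \in \NN$ set $H_n = p_n^{-1}(p_n(\Gamma))$; this is a subgroup of $\slzwei{\ZZ}$ containing $\Gamma$, and since $p_n$ is surjective one has $[\slzwei{\ZZ}:H_n] = [\slzwei{\ZZ/n\ZZ}:p_n(\Gamma)] = e_{\Gamma,n}$. Hence $e_{\Gamma,n}$ divides $[\slzwei{\ZZ}:\Gamma] = d$, and so $f_{\Gamma,n}$ is an integer with $f_{\Gamma,n} \le d$, with equality precisely when $e_{\Gamma,n} = 1$, i.e. precisely when $p_n(\Gamma) = \slzwei{\ZZ/n\ZZ}$.

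Now let $l$ be the Wohlfahrt level of $\Gamma$ and assume $e_{\Gamma,l} = 1$, so that $f_{\Gamma,l} = d$. By Theorem~\ref{thm1} we have $f_{\Gamma,l} \le f_{\Gamma,n}$ for every $n \in \NN$, hence $d = f_{\Gamma,l} \le f_{\Gamma,n} \le d$, which forces $f_{\Gamma,n} = d$, so $e_{\Gamma,n} = 1$ and $p_n(\Gamma) = \slzwei{\ZZ/n\ZZ}$. Thus $\Gamma$ surjects onto $\slzwei{\ZZ/n\ZZ}$ for every natural number $n$.

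There is no real obstacle here once Theorem~\ref{thm1} is available: the corollary is a one-line squeeze argument, and all of the substance sits in Theorem~\ref{thm1} itself. The only points meriting attention are to read the minimality assertion in Theorem~\ref{thm1} correctly — namely as minimality of $f_{\Gamma,n}$ over \emph{all} $n \in \NN$, not merely over divisors of $d$ or of some candidate level — and to keep the a priori bound $f_{\Gamma,n} \le d$ in mind; both are immediate from the definitions recalled above.
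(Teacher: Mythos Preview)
Your proof is correct and is exactly the argument the paper has in mind: the corollary is stated immediately after Theorem~\ref{thm1} with the phrase ``In particular one has the following conclusion'' and is given no separate proof, so the squeeze $d = f_{\Gamma,l} \le f_{\Gamma,n} \le d$ is precisely what is intended. One minor simplification: you do not need the auxiliary group $H_n$ to see that $f_{\Gamma,n}$ is a positive integer bounded by $d$, since by definition $f_{\Gamma,n} = [\Gamma(n):\Gamma(n)\cap\Gamma]$ is itself an index and the identity $d = e_{\Gamma,n}\cdot f_{\Gamma,n}$ comes directly from diagram~(\ref{deficiency-diagram}).
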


Furthermore, one should expect that being a congruence group
is an exception and that a general finite index subgroup of 
$\slzwei{\ZZ}$ has a good chance of being a totally non-congruence
group. 
With Theorem~\ref{enough_parabolics} we give a handsome criterion for being 
a totally non-congruence group. For this we consider widths of cusps (see Section~\ref{preps})
at $0$ and
$\infty$ of $\Gamma$ and of a conjugate of $\Gamma$.

\begin{theorem}[Proof in Section~\ref{section-deficiency}]\label{enough_parabolics}
Let $\Gamma_1$ and $\Gamma_2$ be conjugated finite index subgroups
of $\emslzwei{\ZZ}$. Suppose that $\Gamma_1$ has  width $a_1$ at the cusp $0$ and 
width $b_1$ at the cusp $\infty$ and $\Gamma_2$ has width $a_2$ at the cusp $0$ and
width $b_2$ at  the cusp $\infty$.
If 
\begin{equation}\label{relativelyprime}
n_1 = \emlcm(a_1,b_1) \mbox{ and } n_2 = \emlcm(a_2, b_2) \mbox{ are relatively prime},
\end{equation}
then $\Gamma_1$ and $\Gamma_2$ are totally non-congruence groups,
i.e. they surject onto $\emslzwei{\ZZ/n\ZZ}$ for each natural number $n$.
\end{theorem}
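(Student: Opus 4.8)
The plan is to prove directly that $p_n(\Gamma_1)=\slzwei{\ZZ/n\ZZ}$ for every $n\in\NN$. Since hypothesis \eqref{relativelyprime} is symmetric in the two groups, the same will then hold for $\Gamma_2$; and specializing $n$ to the Wohlfahrt level $l$ gives $e_{\Gamma_i,l}=1$, i.e. that $\Gamma_1$ and $\Gamma_2$ are totally non-congruence groups. (Alternatively one could check $e_{\Gamma_i,l}=1$ for the Wohlfahrt level alone and then invoke Theorem~\ref{thm1} for the remaining $n$; but as $l$ is in general not a prime power, the combinatorial step below cannot be avoided in either case.)

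First I would isolate the following elementary fact: if a finite index subgroup $\Delta\le\slzwei{\ZZ}$ has width $a$ at the cusp $0$ and width $b$ at the cusp $\infty$, and if $m\in\NN$ satisfies $\gcd(m,\lcm(a,b))=1$, then $p_m(\Delta)=\slzwei{\ZZ/m\ZZ}$. Indeed $\Delta$ contains $\begin{pmatrix}1&b\\0&1\end{pmatrix}$ and $\begin{pmatrix}1&0\\a&1\end{pmatrix}$, and since $\gcd(b,m)=\gcd(a,m)=1$ suitable powers of these reduce modulo $m$ to $\begin{pmatrix}1&1\\0&1\end{pmatrix}$ and $\begin{pmatrix}1&0\\1&1\end{pmatrix}$; these two matrices already generate $\slzwei{\ZZ}$, hence their images generate $\slzwei{\ZZ/m\ZZ}$.

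Next I would fix $n$, write $\Gamma_2=g\Gamma_1g^{-1}$ with $g\in\slzwei{\ZZ}$, and decompose $n=n'\cdot n''$, where $n'$ is the largest divisor of $n$ all of whose prime factors divide $n_1$ and $n''=n/n'$. Then $\gcd(n',n'')=1$; every prime factor of $n'$ divides $n_1$, hence by \eqref{relativelyprime} not $n_2$, so $\gcd(n',n_2)=1$; and no prime factor of $n''$ divides $n_1$, so $\gcd(n'',n_1)=1$. Applying the fact above to $\Gamma_1$ with modulus $n''$ gives $p_{n''}(\Gamma_1)=\slzwei{\ZZ/n''\ZZ}$; applying it to $\Gamma_2$ with modulus $n'$ gives $p_{n'}(\Gamma_2)=\slzwei{\ZZ/n'\ZZ}$, whence, conjugating by $p_{n'}(g)$, also $p_{n'}(\Gamma_1)=\slzwei{\ZZ/n'\ZZ}$. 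Under the Chinese Remainder isomorphism $\slzwei{\ZZ/n\ZZ}=\slzwei{\ZZ/n'\ZZ}\times\slzwei{\ZZ/n''\ZZ}$ we conclude that $p_n(\Gamma_1)$ surjects onto each of the two direct factors.

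The decisive --- and, I expect, the only genuinely non-routine --- step is then to conclude that a subgroup of $\slzwei{\ZZ/n'\ZZ}\times\slzwei{\ZZ/n''\ZZ}$ surjecting onto both factors is the full product. By Goursat's lemma this fails only if $\slzwei{\ZZ/n'\ZZ}$ and $\slzwei{\ZZ/n''\ZZ}$ admit a common non-trivial quotient, so it suffices to show that for distinct primes $p\neq q$ the groups $\slzwei{\ZZ/p^k\ZZ}$ and $\slzwei{\ZZ/q^j\ZZ}$ share no isomorphic non-trivial quotient. This reduces to identifying the simple quotients of $\slzwei{\ZZ/p^k\ZZ}$: the reduction $\slzwei{\ZZ/p^k\ZZ}\to\slzwei{\ZZ/p\ZZ}$ has a $p$-group kernel and $\slzwei{\ZZ/p\ZZ}$ is perfect for $p\ge 5$, so for $p\ge 5$ the only simple quotient is $\mbox{PSL}_2(\ZZ/p\ZZ)$, while for $p\in\{2,3\}$ it is $\ZZ/p\ZZ$ --- in each case a group depending on $p$ alone, pairwise non-isomorphic for distinct $p$. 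Since $n'$ and $n''$ are built from disjoint sets of primes, no common non-trivial quotient exists, and the proof is complete. I regard this uniform (in $k$) control of the composition factors of $\slzwei{\ZZ/p^k\ZZ}$ as the main obstacle; the parabolic computation of the second paragraph and the transport of information from $\Gamma_2$ to $\Gamma_1$ along $g$ are routine.
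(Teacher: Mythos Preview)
Your proof is correct, but it takes a more circuitous route than the paper. The paper's Proposition~\ref{thestep} does not merely show that $p_M(\Gamma_1)=\slzwei{\ZZ/M\ZZ}$; it shows the stronger fact that $p_l(\Gamma_1)$ already contains the \emph{subgroup} $\{I\}\times\slzwei{\ZZ/M\ZZ}$ of $\slzwei{\ZZ/N\ZZ}\times\slzwei{\ZZ/M\ZZ}$. The trick is to choose the exponent of $T^{a_1}$ so that the resulting power is $T^{kN}$ with $kN\equiv 1\bmod M$; then its image is literally $(I,T)$, not just something projecting to $T$ in the second factor. Doing the same with $T'^{b_1}$ yields $(I,T')$, and the two generate $\{I\}\times\slzwei{\ZZ/M\ZZ}$. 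Since this is a \emph{normal} subgroup of $\slzwei{\ZZ/l\ZZ}$, it is contained in the conjugate $p_l(\Gamma_2)$ as well; the symmetric argument gives $\slzwei{\ZZ/N\ZZ}\times\{I\}\subseteq p_l(\Gamma_2)$, and hence both images equal $\slzwei{\ZZ/l\ZZ}$. No Goursat, no analysis of the composition factors of $\slzwei{\ZZ/p^k\ZZ}$, is needed.

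Your approach, by contrast, only establishes surjectivity onto each Chinese-remainder factor separately, and then closes the gap via Goursat's lemma together with the classification of the simple quotients of $\slzwei{\ZZ/p^k\ZZ}$. That argument is valid (the abelianisation of $\slzwei{\ZZ/p^k\ZZ}$ is a $p$-group, as one checks using the $p$-group kernel of reduction and the fact that it is a quotient of $\slzwei{\ZZ}^{\mathrm{ab}}\cong\ZZ/12\ZZ$, so the unique simple quotient really is $\mathrm{PSL}_2(\mathbb{F}_p)$ for $p\ge 5$ and $\ZZ/p\ZZ$ for $p\in\{2,3\}$), but it imports more machinery than necessary. What you identified as ``the only genuinely non-routine step'' is precisely the step the paper sidesteps with one line of modular arithmetic.
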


We then study the deficiency of the Veech groups of origamis
in $\omegam{2}(2)$. Using the criterion from Theorem~\ref{enough_parabolics} we 
show that it is always $d$ or $\frac{d}{3}$, 
i.e. they are totally non-congruence groups or at least far
away from being congruence groups.

\begin{theorem}\label{stratumlab}
Let $O$ be an origami in $\omegam{2}(2)$ with $j$ squares and let $\Gamma(O)$
be its Veech group. We distinguish the two different cases that $O$
is in the orbit $A_j$ or $B_j$ in the classification
of orbits in $\omegam{2}(2)$ by McMullen and Hubert/Leli\`evre  (see Section~\ref{hone}).
\begin{itemize}
\item[i)]
If $j$ is odd, $j \geq 5$ and $O$ is in $B_j$, then $\Gamma(O)$ is a totally 
non-congruence group, i.e. it surjects onto $\emslzwei{\ZZ/n\ZZ}$
for each $n \in \NN$.        
\item[ii)] 
If $j$ is even, or $j$ is odd and $O$ is in $A_j$, or $j = 3$, then 
the deficiency $f_{\Gamma,l}$ with respect to the Wohlfahrt level $l$
of $\Gamma(O)$ 
is equal to $\frac{d}{3}$. I.e. the index of its image in $\emslzwei{\ZZ/l\ZZ}$
is 3. 
\end{itemize}
\end{theorem}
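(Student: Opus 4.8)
The plan is to reduce everything to the explicit combinatorial models for the $\slzwei{\ZZ}$-orbits $A_j$ and $B_j$ coming from the McMullen--Hubert--Leli\`evre classification recalled in Section~\ref{hone} (see \cite{HL2}), together with two standard facts about Veech groups of origamis: the width of the cusp of $\Gamma(O)$ at $\infty$ (resp.\ at $0$) equals the $\emlcm$ of the circumferences of the horizontal (resp.\ vertical) cylinders of $O$; and replacing $O$ by $M\cdot O$ with $M\in\slzwei{\ZZ}$ replaces $\Gamma(O)$ by the conjugate $M\Gamma(O)M^{-1}$ and permutes its cusps accordingly. Accordingly I would first fix, for each orbit in play, a convenient representative origami ($L$-shaped or a staircase) and --- for part (ii) --- an explicit finite generating set of its Veech group, both of which are available from \cite{HL2} and \cite{Sc1}.

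For part (i) the plan is to apply Theorem~\ref{enough_parabolics} directly. Starting from a representative $O\in B_j$ with $j$ odd, $j\ge5$, I would read off from its horizontal and vertical cylinder decompositions the width $a_1$ of $\Gamma_1:=\Gamma(O)$ at $0$ and the width $b_1$ at $\infty$; then I would choose a second origami $O'=M\cdot O$ in the same Teichm\"uller curve whose periodic directions at $0$ and $\infty$ correspond to two directions of $O$ whose cylinder decompositions can again be computed from the model, giving the widths $a_2$ at $0$ and $b_2$ at $\infty$ of $\Gamma_2:=\Gamma(O')=M\Gamma_1M^{-1}$. The combinatorics of the $B_j$-model should produce $n_1=\emlcm(a_1,b_1)$ and $n_2=\emlcm(a_2,b_2)$ as two explicit expressions in $j$ which are coprime precisely because $j$ is odd (typically one is $j$ up to a unit and the other is built from $j\pm1$); condition~\eqref{relativelyprime} then holds, and Theorem~\ref{enough_parabolics} yields at once that $\Gamma(O)$ is a totally non-congruence group. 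The only work here is the combinatorial bookkeeping of the two pairs of cusp widths and the elementary coprimality check; I expect no conceptual difficulty once the right conjugating $M$ is chosen.

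For part (ii) I would use that $e_{\Gamma,l}=[\slzwei{\ZZ}:\overline{\Gamma}]$, where $\overline{\Gamma}$ is the congruence closure of $\Gamma=\Gamma(O)$ and $l$ its Wohlfahrt level --- this holds because $\overline{\Gamma}=\Gamma\cdot\Gamma(l)$ by Wohlfahrt's theorem, so $p_l(\overline{\Gamma})=p_l(\Gamma)$ and $\Gamma(l)\subseteq\overline{\Gamma}$. First I would show $\overline{\Gamma}$ is contained in a conjugate $P$ of $\Gamma_0(2)$, i.e.\ that the image of $\Gamma(O)$ in $\slzwei{\ZZ/2\ZZ}\cong S_3$ lies in a subgroup of order $2$: for the origamis occurring in (ii) there is a nontrivial $\slzwei{\ZZ}$-equivariant $\ZZ/2\ZZ$-valued invariant of the covering $O\to$ torus --- a ``parity'' of the monodromy, which is exactly the feature distinguishing these orbits from $B_j$ with $j$ odd, $j\ge5$ --- and its stabilizer maps onto an index-$3$ subgroup of $S_3$. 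Since some cusp of $\Gamma(O)$ has odd width, this image has order exactly $2$, so $\Gamma(O)$ is not contained in a conjugate of $\Gamma(2)$; in particular $2\mid l$ and $e_{\Gamma,l}=[\slzwei{\ZZ}:\overline{\Gamma}]\ge3$.

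It then remains to show $e_{\Gamma,l}=3$, i.e.\ $\overline{\Gamma}=P$ and no strictly smaller congruence group contains $\Gamma(O)$; equivalently, $p_l(\Gamma(O))$ equals the index-$3$ subgroup $p_l(P)$ of $\slzwei{\ZZ/l\ZZ}$. For this I would reduce the explicit generators of $\Gamma(O)$ (the parabolic generators from the periodic directions together with the remaining generators of the presentation) modulo $l$ and check that they generate all of $p_l(P)$; by the Chinese remainder theorem and a Goursat-type argument this comes down to surjectivity onto $\slzwei{\ZZ/p^{v_p(l)}\ZZ}$ for each odd prime $p\mid l$ and onto the index-$3$ subgroup of $\slzwei{\ZZ/2^{v_2(l)}\ZZ}$, where one uses the near-simplicity of $\slzwei{\ZZ/p^k\ZZ}$ for $p\ge5$ and a small separate analysis at the primes $2$ and $3$. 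I would organize this along the three subcases of (ii) --- $j$ even; $j$ odd with $O\in A_j$; and $j=3$, where one simply identifies $\Gamma(O)$ with the theta group, a congruence group of index $3$, so that the claim is immediate. I expect the main obstacle to be precisely this last step: pinning down $p_l(\Gamma(O))$ exactly --- ruling out any further drop in index, uniformly in $j$ and across the subcases --- since, unlike in Theorem~\ref{enough_parabolics}, a handful of cusp widths no longer suffices and one must genuinely exploit a generating set of $\Gamma(O)$.
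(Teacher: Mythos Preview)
Your plan for part~(i) is essentially the paper's: one applies Theorem~\ref{enough_parabolics} to two $L$-shaped representatives in the same $B_j$-orbit. The paper's specific choice is $L_{a,a}$ and $L_{a+2,a-2}$ when $j=2a-1$ with $a$ odd (resp.\ $L_{a+1,a-1}$ and $L_{a+3,a-3}$ when $a$ is even), so that $n_1$ and $n_2$ are built from $a$ and $a\pm2$ rather than from $j$ and $j\pm1$; your guess about the shape of $n_1,n_2$ is off, but the mechanism is the same, with the small cases $j=5,7$ done by direct computation.

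For part~(ii), lower bound, you are pointing at the right phenomenon but the paper's argument is not a vague ``parity of the monodromy'': it is the concrete observation that when the number of integer Weierstra\ss\ points is $1$ or $2$, one of the three non-trivial $2$-division points $A,B,C$ on the base torus is distinguished by how many Weierstra\ss\ points lie over it, hence is fixed by every affine map, so $\Gamma(O)$ sits inside the level-$2$ congruence subgroup stabilising that point, of index~$3$.

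The genuine divergence is in the upper bound $e_l\le 3$. You propose to take an explicit generating set of $\Gamma(O)$ from \cite{HL2} or \cite{Sc1}, reduce modulo $l$, and run a CRT/Goursat/near-simplicity argument prime by prime. The paper does something both simpler and more robust: it never needs generators of $\Gamma(O)$, only \emph{parabolic} elements coming from cylinder decompositions of \emph{several} representatives in the same orbit. Using the auxiliary Lemma~\ref{contains} (a weakening of Theorem~\ref{enough_parabolics} that drops the coprimality hypothesis and instead produces $T^{g_1}$ and $T'^{\,g_2}$ in $p_l(\Gamma)$ for suitable gcd's $g_1,g_2$), the paper combines cusp-width pairs of $L_{a,a}$, $L_{a\pm1,a\mp1}$, $L_{a\pm3,a\mp3}$, $\Cr_{2,j}$ and $L_{2,2a-1}$ with elementary gcd identities like $\gcd(2a-1,a)=1$, $\gcd(2a-1,a+1)\mid 3$, $\gcd(2a-1,a+2)\mid 5$, etc., to force both $p_l(T)$ and $p_l(T'^{\,2})$ (or $p_l(T^2)$ and $p_l(T')$) into $p_l(\Gamma)$. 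Together with $-I$ this already gives $p_l(\Gamma(2))\subseteq p_l(\Gamma)$, hence index at most~$3$. This sidesteps exactly the obstacle you anticipate: no uniform presentation of $\Gamma(O)$ is needed, only cusp widths. Conversely, your approach would in principle work, but the premise that \cite{HL2} or \cite{Sc1} supply explicit generating sets of $\Gamma(O)$ uniformly in $j$ is not correct, so as written the plan has a gap at precisely the point you yourself flag as the main difficulty.
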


\begin{cor}
The Veech groups of all origamis in $\omegam{2}(2)$ 
form {\em an honest family of non-congruence groups}, i.e. an infinite family such
that the level index $e_l$ is bounded by a constant. More precisely 
their level index is bounded by 3.
\end{cor}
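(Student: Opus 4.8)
The plan is to read this off directly from Theorem~\ref{stratumlab}, once one knows that the stratum $\omegam{2}(2)$ carries infinitely many origamis with pairwise non-conjugate Veech groups. For the latter I would invoke the McMullen and Hubert/Leli\`evre classification recalled in Section~\ref{hone}: the $\slzwei{\ZZ}$-orbits in $\omegam{2}(2)$ are the families $A_j$ (for $j\geq 3$) and $B_j$ (for odd $j\geq 5$), and their cardinalities are unbounded as $j\to\infty$. Since the Veech group of an origami is the $\slzwei{\ZZ}$-stabilizer of the corresponding point of the orbit, origamis lying in orbits of different sizes have Veech groups of different index; hence there are infinitely many pairwise non-conjugate Veech groups of origamis in $\omegam{2}(2)$, and in particular the family is infinite.

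Next, let $O$ be any origami in $\omegam{2}(2)$, put $d=[\slzwei{\ZZ}:\Gamma(O)]$ and let $l$ be the Wohlfahrt level of $\Gamma(O)$. Theorem~\ref{stratumlab} places us in one of two cases. In case i) the group $\Gamma(O)$ is a totally non-congruence group, so $e_l=e_{\Gamma,l}=1$. In case ii) the deficiency is $f_{\Gamma,l}=\frac{d}{3}$, hence $e_l=d/f_{\Gamma,l}=3$. In either case $e_l\leq 3$, so the level index of the Veech group of an origami in $\omegam{2}(2)$ is bounded by the constant $3$, independently of the number of squares.

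Finally, recall from the introduction that a finite index subgroup $\Gamma\leq\slzwei{\ZZ}$ of index $d$ with Wohlfahrt level $l$ is a congruence group if and only if $e_{\Gamma,l}=d$. Together with the bound $e_l\leq 3$ this forces $d\leq 3$ for any congruence group occurring in the family; since $\slzwei{\ZZ}$ has only finitely many subgroups of index at most $3$, only finitely many members of the (infinite) family are congruence groups, in accordance with the theorem of Hubert and Leli\`evre that all but one origami in $\omegam{2}(2)$ has non-congruence Veech group. Discarding these finitely many exceptions leaves an infinite family of non-congruence Veech groups whose level index is bounded by $3$, i.e. an honest family of non-congruence groups. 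Granting Theorem~\ref{stratumlab}, there is no genuine obstacle in this argument; the only point deserving a little care is to keep infinitely many members that are \emph{actually} non-congruence rather than merely infinitely many origamis.
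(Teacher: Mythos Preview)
Your argument is correct and follows the paper's approach: the corollary is read off directly from Theorem~\ref{stratumlab}, which gives $e_l\in\{1,3\}$ in all cases, together with the fact that $\omegam{2}(2)$ contains infinitely many origamis. Your first two paragraphs already suffice.

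One remark: your third paragraph does more than is needed. By Definition~\ref{honest-family}, an \emph{honest family of non-congruence groups} is by definition an infinite family with $e(\Gamma_n)$ bounded by a constant; despite the name, the definition does not require the members to be non-congruence groups. So there is no need to discard any exceptional congruence members. Your extra care about producing infinitely many pairwise non-conjugate Veech groups is also stronger than necessary: an infinite indexed family (one Veech group per origami, with infinitely many origamis) already meets the definition, though your argument that the indices $d$ are unbounded is a nice way to see that the family is genuinely infinite even up to conjugacy.
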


{\em Remark:}
A similar approach as in Theorem~\ref{stratumlab} 
is independently used by Christian Wei\ss\
in \cite{weiss} in the proof of Theorem 5.28, 
where he calculates the Euler characteristic of
twisted Teichm\"uller curves arising from the non-origami
$L$-shaped translation surfaces 
$L_D$ as defined in \cite{McM} and \cite{C}.
In this case the Veech group is a subgroup of $\slzwei{\mathcal{O}_D}$,
where $\mathcal{O}_D$ is a real quadratic order of discriminant $D$
and one can similarly look at groups defined by congruence conditions 
in $\slzwei{\mathcal{O}_D}$.\\

The criterion from Theorem~\ref{enough_parabolics}
is quite general and finally allows us to detect 
infinite families of origamis whose Veech groups
are all totally non-congruence groups  also in higher
genus.

\begin{theorem}\label{onezero}
For each $g \geq 3$, the stratum $\omegam{g}(2g-2)$
contains an infinite family of origamis whose Veech groups
are totally non-congruence groups.
\end{theorem}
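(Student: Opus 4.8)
The plan is to deduce Theorem~\ref{onezero} from Theorem~\ref{enough_parabolics}. Recall that $\slzwei{\ZZ}$ acts on the set of origamis with $\Gamma(A\cdot O)=A\,\Gamma(O)\,A^{-1}$, and that the width of the cusp of $\Gamma(O)$ attached to a primitive direction $\vec v\in\ZZ^2$ equals $\lcm_i\bigl(c_i/\gcd(c_i,h_i)\bigr)$, where $c_i$, $h_i$ run over the circumferences and heights of the cylinders of $O$ in direction $\vec v$; the cusp $\infty$ is the horizontal, the cusp $0$ the vertical direction. Applying Theorem~\ref{enough_parabolics} to $\Gamma_1=\Gamma(O)$ and $\Gamma_2=\Gamma(A\cdot O)$ for a suitable $A\in\slzwei{\ZZ}$, it is enough to produce, for every $g\ge 3$, an origami $O\in\omegam{g}(2g-2)$ and a second unimodular pair of directions $(\vec u,\vec v)$ --- different from the horizontal and vertical ones, since conjugating by the matrix interchanging the coordinate axes would merely swap the two coordinate widths --- such that the combined level $n_1$ of $O$ in the horizontal and vertical directions is coprime to the combined level $n_2$ of $O$ in the directions $\vec u,\vec v$. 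Since the number of squares is constant on $\slzwei{\ZZ}$-orbits, it then suffices to carry this out for infinitely many numbers of squares.

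The model is the genus-$2$ case behind Theorem~\ref{stratumlab}(i): the $L$-shaped origami on $j$ squares with $\sigma=(1\,2\,3)$ and $\tau=(1\,4\,5\,\cdots\,j)$, i.e.\ a horizontal foot of circumference $3$ and a vertical leg of circumference $j-2$. A direct computation gives $[\sigma,\tau]=(1\,2\,4)$, so this origami lies in $\omegam{2}(2)$; its horizontal and vertical widths are $3$ and $j-2$, so $n_1=\lcm(3,j-2)$; and since the cylinders in direction $(1,1)$, resp.\ $(1,2)$, are the cycles of $\tau\sigma$, resp.\ $\tau^{2}\sigma$, one checks that for $j$ odd these are each a single $j$-cycle (the foot and the leg chain up through their common square), so $n_2=\lcm(j,j)=j$. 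As $\gcd(j,j-2)=1$ for $j$ odd, $\gcd(n_1,n_2)=\gcd(j,3)$, which is $1$ whenever $j$ is prime to $6$, and Theorem~\ref{enough_parabolics} applies.

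For $g\ge 3$ I would run the same argument with a thickened version: a short horizontal foot of fixed circumference, a long vertical leg of circumference $\approx j$, and a \emph{fixed} gadget of $O(g)$ squares attached to the leg that raises the genus to $g$ and merges all conical points into a single zero of order $2g-2$. Using the count $2g-2=d-c$ for an origami on $d$ squares with $c$ the number of cycles of $[\sigma,\tau]$, together with transitivity, one arranges that the gadget makes $[\sigma,\tau]$ a single $(2g-1)$-cycle, so $O_j\in\omegam{g}(2g-2)$; distinct $j$ give distinct numbers of squares, so the family is infinite. One then recomputes the four relevant widths from the cycle types of $\tau^{q}\sigma^{p}$ for $(p,q)\in\{(1,0),(0,1),(1,1),(1,2)\}$; the point is that the long leg reappears in each of these directions changed only by a \emph{bounded and explicit} amount coming from the gadget, so that $n_1=\lcm(j-c,\kappa)$ and $n_2=\lcm(j-c',\kappa')$ with $c,c',\kappa,\kappa'$ depending only on $g$. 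Finally, choosing $j$ large in a suitable residue class makes $j-c$ and $j-c'$ coprime and prime to $\kappa\kappa'$, hence $\gcd(n_1,n_2)=1$ --- the analogue of the condition ``$j$ odd'' in Theorem~\ref{stratumlab}(i) --- and Theorem~\ref{enough_parabolics} shows that the infinitely many $O_j\in\omegam{g}(2g-2)$ are totally non-congruence groups.

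The step I expect to be the main obstacle is this last uniform computation: the gadget must be designed so that in each of the four directions above the long leg is only boundedly perturbed, and so that the resulting constants $c,c',\kappa,\kappa'$ are compatible with a single congruence condition on $j$; in effect one has to keep track, simultaneously and explicitly, of the cycle type of $[\sigma,\tau]$ (to land in the prescribed stratum) and of the cylinder decompositions in the two diagonal directions. Everything else --- the reduction to Theorem~\ref{enough_parabolics}, the infinitude of the family via the number-of-squares invariant, and the closing arithmetic --- is then formal.
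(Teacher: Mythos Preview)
Your strategy coincides with the paper's: reduce to Theorem~\ref{enough_parabolics} by finding, in the $\slzwei{\ZZ}$-orbit of a suitable origami $O\in\omegam{g}(2g-2)$, a second representative whose horizontal and vertical cusp widths are jointly coprime to those of $O$. The gap you flag yourself---designing the gadget and carrying out the four width computations---is precisely the content the paper supplies, and the outcome is cleaner than your ``bounded perturbation'' heuristic predicts.

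Concretely, the paper takes $O_{g,n}$ (for $n\geq 3g-2$) with $\sigma_a=(1,4,7,\ldots,3g-5,3g-2,3g-1,\ldots,n)$ a single long cycle and $\sigma_b=(1,2,3)(4,5,6)\cdots(3g-5,3g-4,3g-3)$: a long horizontal strip with $g-1$ vertical towers of height~$3$ attached along it. One checks $O_{g,n}\in\omegam{g}(2g-2)$, and its normalised cusp-width pair divides $(n-2g+2,\,3)$. The key step---your ``main obstacle''---is that for $A=T^{-1}T'^{-1}$ the origami $A\cdot O_{g,n}$ is described by $(\sigma_b\sigma_a,\,\sigma_b^2\sigma_a)$, and an explicit calculation shows that \emph{both} of these permutations are single $n$-cycles. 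Hence the second cusp-width pair divides $(n,n)$ exactly, not merely $(j-c',\kappa')$ with unspecified constants. Then $n_1\mid\lcm(n-2g+2,3)$ and $n_2\mid n$ are coprime whenever $n$ is prime to $3$ and to $2g-2$, and Theorem~\ref{enough_parabolics} finishes. So your plan is correct, but until the gadget is written down and the two diagonal-type permutations are verified to be $n$-cycles (or something equally explicit), it is a programme rather than a proof.
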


{\bf Acknowledgements:}
I am indebted to Stefan K\"uhnlein,
who has drawn my attention to non congruence groups
and from whom I have learned the result of Wohlfahrt. 
Thanks to Matthias Nagel, who detected
that the level index of the Veech group of the $L$-origamis
is always 1 or 3 while doing computer experiments within
the project {\em With origamis to Teichm\"uller curves in moduli space}.
Furthermore I am very grateful to
Carlos Matheus for discussions which in the end lead to this article,
and to Matt Bainbridge for conversations which
always brought the project forward.
Furthermore, I would like to thank Frank Herrlich, Myriam Finster 
and Christian Wei\ss\ for proof reading of earlier versions
of this article.\\
I would like to express my gratitude to the
Landesstiftung Baden-W\"urttemberg. The project   
{\em With origamis to Teichm\"uller curves in moduli space}
sponsored by the Landesstiftung Baden-W\"urttemberg
lead to the starting point of this article.
Finally, I am grateful to the Hausdorff Research Institute
for Mathematics in Bonn as well as to the organisers of the
trimester program {\em Geometry and Dynamics of Teichm\"uller 
spaces}, during which part of this work was done. 


\section{Preliminaries}\label{preps}

There is today a wide literature on the topic of translation surfaces,
Veech groups and Teichm\"uller curves. We summarise in the following
paragraphs the basics that we need in this article.
The reader can find more detailed introductions 
to these topics with more hints  to literature
e.g. in \cite{Zorich},  \cite{HeSc}, \cite{M} or \cite{K}.

\subsection{Translation surfaces, origamis and Veech groups}
Recall that a {\em translation surface} is a surface $X^*$ together 
with an atlas $\mu$ such that all transition maps are translations.
A translation atlas is in particular holomorphic
and thus $(X^*,\mu)$ has the structure of a Riemann surface. We
call $(X^*,\mu)$ {\em precompact of genus $g$}, 
if $X^* = X\backslash\{P_1, \ldots, P_k\}$, where $X$ is a closed 
Riemann surface of  genus $g$ and $P_1$, \ldots, $P_k$ 
are finitely many cone points on $X$.
There is a well-known bijection between precompact translation surfaces of genus $g$
(up to translations) and pairs $(X,\omega)$ (up to isomorphisms preserving $\omega$) 
where $X$ is a closed Riemann surface of genus $g$ and $\omega$ is a holomorphic 
differential on $X$ (see e.g. \cite[Section 1.1.3]{HS1}). 
We denote by $\omegam{g}(a_1, \ldots, a_r)$
the moduli space of unit area holomorphic differentials 
on a genus $g \geq 1$ Riemann surface with $r$
zeroes of order $a_1$, \ldots, $a_r$. \\
One of the simplest ways to explicitly construct translation surfaces 
goes as follows: take
finitely many copies of the unit square. Glue them along their edges
via translations such that each left edge is glued to precisely on right edge,
each upper edge to precisely one lower edge and the resulting surface
is connected. This way we obtain a closed surface $X$ with a set $\tilde{S}$ of finitely many 
marked points which come from the vertices of the squares. The Euclidean structure of the 
plane defines a translation structure on $X^* = X\backslash\tilde{S}$. The points in $\tilde{S}$ become
cone-type singularities. We call $X$ a {\em square-tiled surface} or - emphasising its
combinatorial structure - an {\em origami}. 
If the angle around a point $p$ in $\tilde{S}$ is $2\pi$,
we can extend the translation structure into this point. Points in $\tilde{S}$
with total angle bigger than $2\pi$ are called {\em singularities}. Straight lines with respect
to the translation structure which connect two singularities are called {\em saddle connections}.
To each saddle connection we associate its developing vector $\vec{v}$ in the Euclidean plane.
In this article we will only work with {\em primitive origamis}, 
which means that the developing vectors of the saddle connections span $\ZZ^2$ 
(see \cite[Lemma 2.1 and paragraph below]{HL}).\\
There are several equivalent ways to 
combinatorially describe an origami, see \cite{Sc2}. 
A handsome one, if we want to notate explicit examples, 
is the
description by permutations given as follows. Label the squares by $Q_1$, \ldots, $Q_d$.
Let $\sigma_a$  be the permutation in $S_d$ which assigns $i$ the number of the right 
neighbour of the square $Q_i$ and $\sigma_b$ the permutation which assigns $i$ the number
of the upper neighbour of $Q_i$. The pair $(\sigma_a,\sigma_b)$
completely describes the surfaces up to isomorphism. 
Renumbering the squares leads to simultaneous conjugation. Since $X$ is connected,
the group generated by  $\sigma_a$ and $\sigma_b$ acts transitively on $\{1,\ldots, d\}$.
Conversely each transitive pair $(\sigma_a,\sigma_b)$ of permutations in $S_d$ 
defines an origami.\\
A natural description of origamis  is via coverings of the torus as 
described in the following.
The square tiling  naturally defines a finite covering to the torus $E$
formed by gluing opposite edges of one square. This
covering is ramified over at most one point, namely the one point
on $E$ arising from the vertices of the one square. 
This leads
to the following equivalent definition. An {\em origami} is the isomorphism class of a finite connected covering 
$p\colon X \to E$ ramified at most over one point $\infty$. Here two
coverings $p_1\colon X_1 \to E$ and $p_2\colon X_2 \to E$ 
are said to be {\em isomorphic} if there is a map $f\colon X_1 \to X_2$ 
with $p_2\circ f = p_1$.\\
The point of view of coverings naturally leads us to monodromy.
If we are given an origami $p\colon X \to E$, we may remove the point $\infty$ from $E$ and all 
of its preimages in $X$ and obtain an unramified covering of the once-punctured
torus $E^*$. The fundamental group of $E^*$ is $F_2 = F_2(x,y)$,
the free group in two generators $x$ and $y$. We choose $x$ and $y$
to be the simple closed horizontal and vertical curve on $E^*$, respectively. 
By the classical theory of coverings this 
defines a monodromy action $\rho\colon F_2 \to S_d$, where $d$ is the 
degree of the covering $p$ if we choose a base point: The fiber
of $p$ over the base point is a set with $d$ elements. $F_2$ acts on it from
the right, namely for $w\in F_2$ and for $\gamma_w$ the corresponding
closed curve on $E^*$, $\rho(w)$ maps a point $q$ in the fiber to
the endpoint of the lift of $\gamma_w$ with starting point $q$. This
depends on the choise of the base point only up to conjugation.
Recall that a {\em monodromy action} is an
anti-homomorphism such that its image acts transitively on the set $\{1, \ldots, d\}$.
Observe that we have for the pair of permutations $(\sigma_a,\sigma_b)$
from above that $\sigma_a = \rho(x)$ and $\sigma_b = \rho(y)$.\\

\begin{figure}[h]
\begin{center}
  \setlength{\unitlength}{1cm}
  \begin{picture}(6,5)
    \put(0,2){\framebox(1,1){1}}
    \put(0,3){\framebox(1,1){2}}
    \put(0,4){\framebox(1,1){3}}
    \put(1,2,){\framebox(1,1){4}}
    \put(1,0){\framebox(1,1){5}}
    \put(1,1){\framebox(1,1){6}}
    \put(2,2,){\framebox(1,1){7}}
    \put(3,2){\framebox(1,1){8}}
    \put(4,2){\framebox(1,1){9}}
    \put(0,2){\circle*{.2}}
    \put(0,3){\circle*{.2}}
    \put(0,5){\circle*{.2}}
    \put(1,2){\circle*{.2}}
    \put(1,3){\circle*{.2}}
    \put(1,5){\circle*{.2}}
    \put(1,0){\circle*{.2}}
    \put(2,0){\circle*{.2}}
    \put(2,2){\circle*{.2}}
    \put(2,3){\circle*{.2}}
    \put(5,2){\circle*{.2}}
    \put(5,3){\circle*{.2}}
  \end{picture}
\end{center}
\begin{center}
\refstepcounter{diagramm}{\it Figure \arabic{diagramm}}:\label{ori-example}
{\em An origami in $\Omega_3(4)$: Opposite edges are glued.}
\end{center}
\end{figure}

\begin{example}
Figure~\ref{ori-example} shows an origami with 9 squares. 
Opposite edges are glued. The origami is given by the
two permutations:
\[ \sigma_a = (1,4,7,8,9) \mbox{ and } \sigma_b = (1,2,3)(4,5,6).\]
The marked vertices glue to one point 
on the surface $X$ of total angle $10\pi$. 
The unmarked vertices become regular points, i.e. the total 
angle around them is $2\pi$.
A simple Euler characteristic calculation
shows that the genus of $X$ is 3. 
\end{example}

In \cite{Veech3} Veech introduced a group associated to a translation surface $(X^*,\mu)$
which is today called the {\em Veech group}. Let Aff$^+(X^*,\mu)$ be the
group of orientation preserving affine homeomorphisms of $X^*$, i.e. those homeomorphisms
which are affine on charts. Thus locally they are of the form
${x\choose y} \mapsto A \cdot {x\choose y} + {a\choose b}$
with $A \in \slzwei{\RR}$ and ${a\choose b} \in \RR^2$. The matrix $A$ is
independent of the chart, since the transition maps are translations.
We obtain a homomorphism $D\colon \mbox{Aff}^+(X,\mu) \to \slzwei{\RR}, f \mapsto A$
called the {\em derivative map}.
The {\em Veech group $\Gamma(X^*,\mu)$} is the image of $D$. Veech already proved 
in \cite{Veech3} that $\Gamma(X^*,\mu)$ is a discrete subgroup of $\slzwei{\RR}$
and that it is never cocompact. One reason why Veech groups were intensively studied
in the last fifteen years is that they are closely related to 
{\em Teichm\"uller curves} (see e.g. \cite{McM}, \cite{HeSc}). Namely, each translation surface defines a certain 
subset of the corresponding
moduli space of closed Riemann surfaces of genus $g$. Occasionally, this 
subset is a complex algebraic curve $C$, then  called {\em Teichm\"uller curve}.
It can be read off from the Veech group, whether  this happens or not. We obtain a Teichm\"uller curve,
if and only if $\Gamma(X^*,\mu)$
has finite volume. In this case one even has that $C$ is birational to $\HH/\Gamma(X^*,\mu)$.
In this article we will just need the following properties of Veech groups:

\begin{itemize}
\item
  The Veech group of a primitive origami is a finite index subgroup of $\slzwei{\ZZ}$ 
  (see \cite[Cor. 2.9]{Sc0} or for a more general discussion \cite{GJ})
\item 
  $\slzwei{\ZZ}$ acts on the set of isomorphism classes of origamis with $d$ squares and 
  the Veech group is the stabiliser of this action.
\item 
  If the translation surface $(X,\mu)$ is defined by an origami, then we have for each
  rational direction $\vec{v} = {p\choose q}$ (with $p,q \in \ZZ$): The surface decomposes in direction
  $\vec{v}$ into finitely many cylinders $C_1$, \ldots, $C_k$. If $m_1$, \ldots, $m_r$ 
  are the inverse moduli of the cylinders, respectively, and $m$ is the smallest common integer
  multiple of the $m_i$'s, then 
  \[ A \cdot \begin{pmatrix} 1& m\\ 0 & 1 \end{pmatrix} A^{-1}\]
  is in the Veech group, where $A$ is an arbitrary matrix in $\slzwei{\ZZ}$
  which maps ${1 \choose 0}$ to $\vec{v}$.
\end{itemize}

We will explain the second and third statement in more detail in the following 
two sections.

\subsection{Action of \boldmath{$\slzwei{\ZZ}$} on origamis}\label{slzweiz-action}

There is a natural action of the group $\slzwei{\RR}$ on 
translation surfaces defined by $A\colon  (X,\mu) \mapsto (X,\mu_A)$,
where $\mu_A$ is the translation structure obtained from 
$\mu$ by composing each chart with the affine map 
\[A\colon {x\choose y} \mapsto A\cdot{x\choose y}.\] 
The stabiliser of $(X,\mu)$ (up to isomorphisms)
is the Veech group $\Gamma(X,\mu)$
(see e.g. \cite[Theorem 1]{EG}).\\
If $(X,\mu)$
comes from a primitive origami $O$ and $A \in \slzwei{\ZZ}$, then
$A\cdot(X,\mu)$ is again the translation structure 
of an origami $O_A = A\cdot O$. 
More precisely,
if $(X,\mu)$ is defined by the covering $p\colon X \to E$,
then $A\cdot(X,\mu) = (X,\mu_A)$ is the origami 
defined by the covering $p_A := A\circ p\colon  
 X \stackrel{p}{\to} E  \stackrel{A}{\to} E$. 
The length of the  orbit of the origami $O$
under this  $\slzwei{\ZZ}$-action on the set of origamis
is the index $[\slzwei{\ZZ}:\Gamma(X,\mu)]$
of the Veech group in $\slzwei{\ZZ}$.
\\
In terms of the corresponding monodromy maps, the action expresses as follows: 
Let $\rho\colon F_2 \to S_d$ be the monodromy map corresponding to $O$,
then $A\cdot\rho = \rho_A = \rho\circ\gamma_A^{-1}$ is the monodromy map
of the origami $A\cdot O$, where $\gamma_A$ is a preimage of $A$
under the natural homomorphism 
$\mbox{Aut}(F_2) \to \mbox{Out}(F_2) \cong \mbox{GL}_2(\ZZ)$.\\
Finally, in terms of the corresponding pair of permutations 
the action of $\slzwei{\ZZ}$ is given as follows: 
The pair $(\sigma_a^{O_A},\sigma_b^{O_A})$ with
\[\sigma_a^{O_A} = \rho(\gamma_A^{-1}(x)) \mbox{ and } 
  \sigma_b^{O_A} = \rho(\gamma_A^{-1}(y))
\]
describes the origami $O_A$. In particular, for the 
matrices 
\begin{equation}\label{matrices}
  T = \bpm 1&1\\0&1\epm, T' = \bpm 1&0\\1&1 \epm \mbox{ and }
  S = \bpm 0&-1\\1&0\epm
\end{equation}
we obtain the permutations:
\[
\begin{array}{lll}
  \sigma_a^{O_T} = \rho(x) = \sigma_a &
  \mbox{ and }& \sigma_b^{O_T} = \rho(x^{-1}y) = \rho(y)\rho(x)^{-1} = \sigma_b\sigma_a^{-1},\\
  \sigma_a^{O_{T'}} = \sigma_b^{-1}\sigma_a & \mbox{ and } &\sigma_b^{O_{T'}} = \sigma_b,\\
  \sigma_a^{O_{S}} = \sigma_b^{-1} & \mbox{ and } & \sigma_b^{O_{S}} = \sigma_a.
\end{array}
\]
\mbox{and furthermore for the inverse matrices:}
\[
\begin{array}{lcll}
  (\sigma_a^{O_{T^{-1}}}, \sigma_b^{O_{T^{-1}}}) &=& (\sigma_a,\sigma_b\sigma_a), & 
  (\sigma_a^{O_{T'^{-1}}}, \sigma_b^{O_{T'^{-1}}}) = (\sigma_b\sigma_a,\sigma_b) \mbox{ and }\\
  (\sigma_a^{O_{S^{-1}}}, \sigma_b^{O_{S^{-1}}})  &=& (\sigma_b,\sigma_a^{-1})&
\end{array}
\]


\subsection{Parabolic elements in the Veech group}\label{cylinders}

Recall that a {\em cylinder} on a translation surface $(X,\mu)$
is a connected set of simple closed geodesics. If $w$ is its
width (also called circumference) and $h$ is its height, then the {\em inverse modulus}
is $m = \frac{w}{h}$. A cylinder $C$ has {\em direction} $\vec{v}$, if
$\vec{v}$ is parallel to the developing vector of the simple closed
geodesics forming $C$. Let us suppose for the moment that 
$\vec{v} =  {1\choose 0}$. Then $C$ is obtained from a rectangle 
of width $w$ and height $h$ by gluing the two vertical edges. The
affine map 
\[
\begin{pmatrix} x\\y \end{pmatrix} \mapsto 
T_m
\cdot \begin{pmatrix}x\\y\end{pmatrix}
\mbox{ with }
T_m =  \begin{pmatrix} 1& m\\ 0&1 \end{pmatrix}
\]
defines an affine map on the cylinder which fixes the boundary pointwise.
Suppose now that the translation surface $(X,\mu)$ decomposes into
horizontal cylinders $C_1$, \ldots, $C_k$ of inverse moduli $m_1$, \ldots, $m_k$.
If m is an integer multiple of each $m_i$, then the map $z \mapsto T_mz$ 
defines on each cylinder an affine map which fixes the boundary pointwise.
These maps glue  to an affine map of the whole surface with derivative $T_m$
which fixes the horizontal saddle connections pointwise. It acts as
the product of powers of Dehn twists along the middle lines of the cylinders.\\
Suppose know that $(X,\mu)$ is an origami given by the permutations $(\sigma_a,\sigma_b)$.
Then the surface naturally decomposes into horizontal (not necessary maximal) 
cylinders of height 1. The number of cylinders is the number of cycles in $\sigma_a$.
The length of a cylinder is the size of the corresponding cycle. In particular,
the matrix $T^n$ is in the Veech group, where $n$ is the least common multiple
of the cycle lengths of $\sigma_a$.\\
For an arbitrary direction $\vec{v}$ we obtain precisely the same
statements if we replace $T$ by its conjugate $ATA^{-1}$ with some arbitrary 
matrix $A$ which maps ${1\choose 0}$ to $\vec{v}$, and the pair of permutations 
$(\sigma_a,\sigma_b)$ by $A^{-1}\cdot (\sigma_A,\sigma_B)$ (see \ref{slzweiz-action}).
In particular we have that 
\[T'_m = \bpm 1 & 0\\ m &1 \epm\] 
is in the Veech group, 
if $m$ is a multiple of the inverse moduli of the vertical cylinders.

\subsection*{Cusps and widths of cusps}
Let us briefly recall some basics from 
the theory of Fuchsian groups that will be important in this article.
The reader not familiar with this can find a good introduction
e.g. in \cite{katok}.
Suppose that $\Gamma$ is a finite index subgroup of $\slzwei{\ZZ}$.
We can build
a fundamental domain of $\Gamma$ by taking finitely many images of our
favourite fundamental domain of $\slzwei{\ZZ}$; each image corresponds to 
one coset of $\Gamma$ in $\slzwei{\ZZ}$. The quotient $C = \HH/\Gamma$
is a Riemann surface of finite type, i.e. it is biholomorphic
to a closed surface $\overline{C}$ with finitely many points called {\em cusps} removed.
The word {\em cusp} is here used in a dual role. A {\em cusp}
of $\Gamma$ (on $\HH$) is a point $x$ on the boundary of $\HH$ which is the fixed
point of some parabolic element $A$ in $\Gamma$. We also call the $\Gamma$-orbits of
such points {\em cusps}, more precisely {\em the cusps of $\HH/\Gamma$}. 
They bijectively correspond to the
points in $\overline{C}\backslash C$.\\
For a fixed cusp $x$ of $\Gamma$ on the boundary of $\HH$ 
the stabiliser of $x$ in $\Gamma$ is a cyclic group generated
by a parabolic element $P$. Since $\Gamma$ is a subgroups of $\slzwei{\ZZ}$,
$P$ is  conjugate to $T^n$ with $n \in \ZZ$ and $T$ the matrix from (\ref{matrices}).
We call the absolute value of $n$ the {\em width} of the cusp $x$.\\
The cusps of $\Gamma$ on $\HH$ bijectively correspond to maximal
cyclic parabolic subgroups of $\Gamma$. Two cusps are in the same $\Gamma$-orbit
if and only if the corresponding maximal parabolic subgroups
are conjugated in $\Gamma$. Accordingly, 
the cusps of $\HH/\Gamma$ correspond to the conjugacy classes of maximal cyclic
parabolic subgroups of $\Gamma$. In particular the widths of cusps
in the same orbit are equal
and we may speak of the {\em width of cusps of \boldmath{$\HH/\emslzwei{\ZZ}$}}.
The width of a cusp is the number of copies of the fundamental domain of $\slzwei{\ZZ}$
which lie around it.


\section{The deficiency of being a non-congruence group}\label{section-deficiency}

Recall that a subgroup $\Gamma$ of SL$(2,\ZZ)$ is called a 
{\em congruence group of level $l$}, if $\Gamma$ contains the principal
congruence group  $\Gamma(l)$ of level $l$, where
\[\Gamma(l) = \{A |\; A \equiv \bpm 1&0\\0&1 \epm \mbox{ mod } l \}.\]
{\em The minimal congruence level} of  $\Gamma$ is then the smallest number
$l$ such that $\Gamma$ is a congruence group of level $l$.
$\Gamma$ is called a {\em congruence group} if it is a congruence group
of level $l$ for some $l$, otherwise it is called a {\em non-congruence group}.\\

In general we have for an arbitrary finite index subgroup $\Gamma$ of SL$(2,\ZZ)$
and for an arbitrary natural number $m$ the following commutative diagram
of exact sequences:
\begin{equation}\label{deficiency-diagram}
  \xymatrix{
    1 \ar[r]  & \Gamma(m) \ar[r] & \slzwei{\ZZ}\ar[r]& \slzwei{\ZZ/m\ZZ}\ar[r] & 1\\
    1 \ar[r]  & \Gamma(m)\cap \Gamma \ar[r]\ar@{^{(}->}[u] & \Gamma \ar[r]\ar@{^{(}->}[u]& 
    \Gammaquer \ar[r]\ar@{^{(}->}[u] & 1\\
  },
\end{equation}
where $\Gammaquer$ is the image of $\Gamma$ in $\slzwei{\ZZ/m\ZZ}$
by the natural projection
\[p_m\colon  \slzwei{\ZZ} \to \slzwei{\ZZ/m\ZZ}.\]
Consider the three indices $d := [\slzwei{\ZZ}:\Gamma]$, 
$e := [\slzwei{\ZZ/m\ZZ}:\Gammaquer]$ and 
$f:= [\Gamma(m):\Gamma(m)\cap\Gamma]$.
We then have from the diagram that $d = e\cdot f$. Observe that $f = 1$
if and only if $\Gamma$ is a congruence group of level $m$. In general, $f$ is a factor of the index 
$d$.

\begin{definition}
Let $\Gamma$ be a finite index subgroup of $\slzwei{\ZZ}$.
The {\em deficiency} $f = f_m = f_{\Gamma,m}$
of $\Gamma$ with respect to $m$ is the index $f = [\Gamma(m):\Gamma(m)\cap \Gamma]$.
\end{definition}

\begin{remark}\label{ftod}
We directly read off from (\ref{deficiency-diagram}) that:
\[
  f_m = \frac{d}{e_m} 
  \mbox{ with } d = [\slzwei{\ZZ}: \Gamma]
  \mbox{ and } e_m = [\slzwei{\ZZ/m\ZZ}:p_m(\Gamma)]
\]
\end{remark}

Recall that for a general finite index subgroup $\Gamma$ of $\slzwei{\ZZ}$
we can define the general Wohlfahrt level $l$ as follows.

\begin{definition} 
Let $\Gamma$ be a finite index subgroup of $\slzwei{\ZZ}$.
The {\em Wohlfahrt level} $l = l(\Gamma)$ of $\Gamma$ is defined by
\[l = l(\Gamma) =  \mbox{lcm}(\{b|\, b \mbox{ is the width of a cusp of $\Gamma$}\}) .\]
We will often call $l$  just {\em the level} of $\Gamma$.
\end{definition}
The Wohlfahrt level generalises the congruence level. This result,
which is  crucial for what we do, was proven by Wohlfahrt in \cite{Wohlfahrt}.

\begin{theoremA}[Wohlfahrt]\label{wohlfahrt}
If $\Gamma$ is a congruence group, then the Wohlfahrt level
equals the congruence level of $\Gamma$. 
\end{theoremA}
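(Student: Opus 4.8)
\textbf{Proof plan for Theorem~\ref{wohlfahrt} (Wohlfahrt).}
The plan is to show that if $\Gamma$ is a congruence group, then its Wohlfahrt level $l$ (the lcm of all cusp widths) divides the minimal congruence level $l_0$, and conversely that $\Gamma \supseteq \Gamma(l)$, so that $l_0 \le l$ as well, forcing $l = l_0$. One direction is elementary: if $\Gamma \supseteq \Gamma(l_0)$, then for every cusp $x$ of $\Gamma$ the maximal parabolic subgroup fixing $x$ contains the corresponding parabolic subgroup of $\Gamma(l_0)$; since the width of any cusp of $\Gamma(l_0)$ is exactly $l_0$ (a conjugate of $T^{l_0}$ generates the parabolic stabiliser in $\Gamma(l_0)$), the width $b$ of the cusp $x$ in $\Gamma$ divides $l_0$. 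Hence $l = \operatorname{lcm}(b) \mid l_0$.

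The substantive direction is $\Gamma(l) \subseteq \Gamma$, i.e. that the lcm of cusp widths already gives a congruence level. First I would reduce to the normal closure: let $\Gamma'$ be the normal core of $\Gamma$ in $\slzwei{\ZZ}$ (intersection of all conjugates), which is again finite index, and observe that the set of cusp widths of $\Gamma'$ has the same lcm $l$ — indeed every cusp width of $\Gamma$ appears among those of $\Gamma'$ up to the conjugation action, and conversely cusp widths of $\Gamma'$ divide those of $\Gamma$ since $\Gamma' \subseteq \Gamma$. So it suffices to prove $\Gamma(l) \subseteq \Gamma'$ for a normal subgroup $\Gamma'$ whose cusp widths all divide $l$. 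Now in the finite quotient $Q = \slzwei{\ZZ}/\Gamma'$, the image of $T$ has order dividing $l$ (that image generates the image of the parabolic subgroup fixing $\infty$, whose width divides $l$); and since $\Gamma'$ is normal, the image of \emph{every} parabolic element, in particular every conjugate of $T$, has order dividing $l$. The key group-theoretic input is then: the kernel of $\slzwei{\ZZ} \to \slzwei{\ZZ/l\ZZ}$ is the normal subgroup generated by $T^l$ (equivalently, $\slzwei{\ZZ/l\ZZ}$ is the quotient of $\slzwei{\ZZ}$ by imposing that the parabolic generator has order $l$) — this is a standard presentation fact, following from the surjectivity of $\slzwei{\ZZ} \to \slzwei{\ZZ/l\ZZ}$ together with the fact that $\Gamma(l)$ is generated as a normal subgroup by $T^l$. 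Granting this, the homomorphism $\slzwei{\ZZ} \to Q$ kills $T^l$ and all its conjugates, hence factors through $\slzwei{\ZZ/l\ZZ}$, which means $\Gamma(l) \subseteq \ker(\slzwei{\ZZ}\to Q) = \Gamma' \subseteq \Gamma$.

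I would carry this out in three steps: (1) the easy divisibility $l \mid l_0$ via cusp widths of $\Gamma(l_0)$; (2) the reduction to a normal finite-index subgroup preserving the lcm of cusp widths, and the observation that in the finite quotient all conjugates of $\overline{T}$ have order dividing $l$; (3) invoke the normal generation of $\Gamma(l)$ by $T^l$ inside $\slzwei{\ZZ}$ to conclude $\Gamma(l)\subseteq\Gamma$, giving $l_0 \le l$ and hence $l_0 = l$.

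The main obstacle is step (3): establishing that $\Gamma(l)$ is the normal subgroup of $\slzwei{\ZZ}$ generated by $T^l$. This is exactly the heart of Wohlfahrt's theorem and is not formal — one needs either an explicit analysis of $\slzwei{\ZZ/l\ZZ}$ via the Chinese Remainder Theorem reducing to prime powers, together with a computation (essentially Mennicke/Bass–Lazard–Serre type, or a direct $2\times 2$ argument) that $\Gamma(l)$ modulo the normal closure of $T^l$ is trivial; or a clean reference. I would most likely reduce to $l$ a prime power, use that $\slzwei{\ZZ}\to\slzwei{\ZZ/l\ZZ}$ is surjective with kernel $\Gamma(l)$, and show $\Gamma(l)$ is generated by commutators and conjugates of $T^l$ and $T'^l$ — and then that $T'^l$ is itself a product of conjugates of $T^l$ since $T'$ is conjugate to $T$ in $\slzwei{\ZZ}$. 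Handling the prime $2$ (where small-level phenomena and the failure of $\slzwei{\ZZ}$ to be generated as nicely can intervene) will require the most care.
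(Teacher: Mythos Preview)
The paper does not give its own proof of Theorem~A; it is stated as a result of Wohlfahrt with a reference to \cite{Wohlfahrt} and then used as a black box. So there is no paper proof to compare against, and your outline is in fact a sketch of Wohlfahrt's original argument: the substantive content is exactly your step~(3), that $\Gamma(l)$ equals the normal closure of $T^l$ in $\slzwei{\ZZ}$.

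Two remarks on the plan itself. First, the justification in step~(2) is reversed: from $\Gamma'\subseteq\Gamma$ one gets that cusp widths of $\Gamma$ divide those of $\Gamma'$, not the other way around (compare Remark~\ref{level-comonotony}). The conclusion $l(\Gamma')=l(\Gamma)$ is nevertheless true, but for a different reason: each conjugate $g\Gamma g^{-1}$ has Wohlfahrt level $l$, so at every cusp the width of $\Gamma'=\bigcap_g g\Gamma g^{-1}$ is an lcm of divisors of $l$, hence divides $l$; combined with $l\mid l(\Gamma')$ this gives equality. Second, this whole reduction to the normal core is unnecessary once you have step~(3). If $\Gamma(l)$ is generated by the conjugates $gT^lg^{-1}$, then for each $g$ the cusp $g\cdot\infty$ has width $b_g$ dividing $l$ in $\Gamma$, so $gT^{b_g}g^{-1}\in\Gamma$ and hence $gT^lg^{-1}=(gT^{b_g}g^{-1})^{l/b_g}\in\Gamma$; thus $\Gamma(l)\subseteq\Gamma$ directly. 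So step~(2) can be dropped and step~(3) is, as you say, the entire theorem.
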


In particular this means for a congruence group that the
deficiency $f_l$ equals 1, if $l$ is its (Wohlfahrt) level.  In this
section we will show that for a general finite index subgroup $\Gamma$ 
of $\slzwei{\ZZ}$ the deficiency $f_m$
is minimised, if $m$ equals the Wohlfahrt level $l(\Gamma)$.
Thus let for the rest of this section
$\Gamma$ be a fixed subgroup of $\slzwei{\ZZ}$ of finite index
with Wohlfahrt level $l = l(\Gamma)$.\\

Observe first that we immediately  obtain  from the definition of the Wohlfahrt level
the following remark.

\begin{remark}\label{level-comonotony}
Let $\Gamma$ and $\Gamma '$ be two subgroups of 
$\slzwei{\ZZ}$. If $\Gamma \subseteq \Gamma '$, then $l(\Gamma ')$ divides $l(\Gamma)$.
\end{remark}

We will further need the following property of the 
deficiency.

\begin{lemma}\label{multiple}
For a multiple $k\cdot a$ of a natural number $a$  we have for the
corresponding deficiencies:
\[f_{ka} \mbox{ is a divisor of } f_a.\]
\end{lemma}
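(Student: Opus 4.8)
The plan is to compare the three terms in the index relation $d = e_m \cdot f_m$ as $m$ varies from $a$ to $ka$, using Remark~\ref{ftod}. Since $d = [\slzwei{\ZZ}:\Gamma]$ does not depend on $m$, it suffices to show that $e_a = [\slzwei{\ZZ/a\ZZ}:p_a(\Gamma)]$ is a divisor of $e_{ka} = [\slzwei{\ZZ/ka\ZZ}:p_{ka}(\Gamma)]$; then $f_a = d/e_a$ is a multiple of $d/e_{ka} = f_{ka}$, which is exactly the claim. So the whole lemma reduces to a statement about the two natural projections and the behaviour of the index of the image of $\Gamma$.

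First I would set up the commutative square of reduction maps: the quotient map $\pi\colon \ZZ/ka\ZZ \to \ZZ/a\ZZ$ induces a surjection $q\colon \slzwei{\ZZ/ka\ZZ} \to \slzwei{\ZZ/a\ZZ}$, and one has $p_a = q \circ p_{ka}$ as maps $\slzwei{\ZZ} \to \slzwei{\ZZ/a\ZZ}$. Consequently $p_a(\Gamma) = q(p_{ka}(\Gamma))$. Now I would invoke the elementary group-theoretic fact that a surjective homomorphism $q\colon G \to \overline{G}$ sends a subgroup $H \leq G$ of index $e$ to a subgroup $q(H) \leq \overline{G}$ whose index divides $e$: indeed $q$ induces a surjection of coset spaces $G/H \twoheadrightarrow \overline{G}/q(H)$ (well-defined because $q(gH) \subseteq q(g)q(H)$ and surjective because $q$ is), so $[\overline{G}:q(H)] \leq [G:H]$, and in fact $[\overline{G}:q(H)]$ divides $[G:H]$ because $q^{-1}(q(H)) \supseteq H$ is an intermediate subgroup and $[\overline{G}:q(H)] = [G:q^{-1}(q(H))]$ divides $[G:H]$ by the tower law. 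Applying this with $G = \slzwei{\ZZ/ka\ZZ}$, $\overline{G} = \slzwei{\ZZ/a\ZZ}$, $H = p_{ka}(\Gamma)$ gives that $e_a = [\slzwei{\ZZ/a\ZZ}:p_a(\Gamma)]$ divides $e_{ka}$.

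Combining, write $e_{ka} = e_a \cdot c$ for some $c \in \NN$. Then $f_a = d/e_a$ and $f_{ka} = d/e_{ka} = d/(e_a c) = f_a / c$, so $f_{ka}$ is a divisor of $f_a$, as desired. I would also note that $e_a \mid d$ and $e_{ka} \mid d$ hold automatically from $d = e\cdot f$ in diagram~(\ref{deficiency-diagram}), so all the quotients appearing are genuine integers and there is no divisibility subtlety hiding in the background.

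I do not expect any serious obstacle here; the statement is essentially a bookkeeping consequence of the functoriality of $p_m$ in $m$ together with the multiplicativity of the index. The only point that deserves a careful sentence rather than a one-liner is the claim that the index of the image can only drop to a \emph{divisor} (not merely a smaller number or an arbitrary divisor of $d$) under the projection $q$ — this is precisely where one uses $q^{-1}(q(H)) \supseteq H$ and the tower law, rather than just the crude surjection of coset spaces. Everything else is immediate from Remark~\ref{ftod}.
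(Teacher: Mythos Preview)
Your proof is correct and follows essentially the same approach as the paper: introduce the projection $q\colon \slzwei{\ZZ/ka\ZZ} \to \slzwei{\ZZ/a\ZZ}$, observe $p_a(\Gamma) = q(p_{ka}(\Gamma))$, deduce $e_a \mid e_{ka}$, and conclude via Remark~\ref{ftod}. The paper simply asserts the divisibility $e_a \mid e_{ka}$ in one line, whereas you spell out the tower-law justification via $q^{-1}(q(H)) \supseteq H$; this is a welcome expansion rather than a different route.
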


\begin{proof}
Consider the natural projection $q\colon  \slzwei{\ZZ/ka\ZZ} \to \slzwei{\ZZ/a\ZZ}$.
Then we have $q(p_{ka}(\Gamma)) = p_{a}(\Gamma)$. Thus we obtain for the indices that
$e_a$ divides $e_{ka}$. Hence the claim follows from Remark~\ref{ftod}.
\end{proof}

We are now ready to prove Theorem~\ref{thm1}, which we stated in
the introduction.

\begin{proof}[Proof of Theorem~\ref{thm1}]
From Lemma~\ref{multiple} we have that $f_m \geq f_{ml}$. 
We now show that $f_{ml} = f_l$. Consider
 the group $\Gamma' = p_{ml}^{-1}(p_{ml}(\Gamma))$. It has 
by its construction the same image in $\slzwei{\ZZ/lm\ZZ}$ 
as $\Gamma$, i.e. $p_{lm}(\Gamma) = p_{lm}(\Gamma')$ and $\Gamma$
is contained in $\Gamma'$.
Furthermore $\Gamma'$ is by its 
definition a congruence group. Let $l'$ be its congruence level
which equals the Wohlfahrt level by Theorem A.
We obviously have $\Gamma(l') \subseteq \Gamma'$.  
Furthermore, by Remark~\ref{level-comonotony} 
we have that $l'$ divides $l = l(\Gamma)$ and thus $\Gamma(l) \subseteq \Gamma(l')$. 
Altogether we obtain that 
$p_{lm}(\Gamma) = p_{lm}(\Gamma') \supseteq p_{lm}(\Gamma(l))$. \\
Consider now the following commutative diagram of exact sequences:
\[
  \xymatrix{
    1 \ar[r]  & p_{lm}(\Gamma(l)) \ar[r] & \slzwei{\ZZ/lm\ZZ}\ar[r]& 
      \slzwei{\ZZ/l\ZZ}\ar[r] & 1\\
    1 \ar[r]  & p_{lm}(\Gamma(l))\cap p_{lm}(\Gamma) \ar[r]\ar@{^{(}->}[u] & 
      p_{lm}(\Gamma) \ar[r]\ar@{^{(}->}[u]& 
      p_{l}(\Gamma) \ar[r]\ar@{^{(}->}[u] & 1}
\]
Since $ p_{lm}(\Gamma(l))$ is contained in $p_{lm}(\Gamma)$,
we have that  $p_{lm}(\Gamma(l))\cap p_{lm}(\Gamma) = p_{lm}(\Gamma(l))$, i.e.
$[p_{lm}(\Gamma(l)):p_{lm}(\Gamma(l))\cap p_{lm}(\Gamma)] = 1$. Thus
we obtain from the diagram 
\[e_{lm} = [\slzwei{\ZZ/lm\ZZ}:p_{lm}(\Gamma)] = 
    1 \cdot [\slzwei{\ZZ/l\ZZ}:p_{l}(\Gamma)] = e_l .\] 
From this
we obtain $f_{lm} = \frac{d}{e_{lm}} = \frac{d}{e_l} =  f_l$ by Remark~\ref{ftod}.
\end{proof}

This result suggests to consider the deficiency 
of $\Gamma$ with respect to the Wohlfahrt level as a measure
of ``how far away'' $\Gamma$ is from being a congruence group. 
The bigger it is, the further the group is away from being a congruence group.
Equivalently, we may consider the number $e = \frac{d}{f}$.
If it is 1, then we are ``as far possible'' from being a congruence group.

\begin{definition}\label{honest-family}
Let $l$ be the Wohlfahrt level of $\Gamma$ and $f = f_l$,
$e = e_l$ as above.
We call $f = f(\Gamma)$ the {\em  deficiency of $\Gamma$ (of being a congruence group)}
and $e = e(\Gamma)$ its {\em level index}. If $e = 1$, then we say that $\Gamma$
is a {\em totally non-congruence group}. For an infinite family $\Gamma_n$,
we say it is an {\em honest family of non-congruence groups},
if $e(\Gamma_n)$ is bounded by a constant.
\end{definition}

Theorem~\ref{enough_parabolics} will give us a criterion
to show that the Veech group 
of an origami is a totally non-congruence group. It will be crucial
that the Wohlfahrt levels of conjugated groups are equal.
One main ingredient will be to consider the width of the cusps $\infty$ and $0$.
This motivates the following definition.

\begin{definition}\label{cuspsizepair}
Let $\Gamma$ be a subgroup of $\slzwei{\ZZ}$ of finite index
which has cusps at $\infty$ and $0$ of width $a$ and $b$, respectively. We call 
the pair $(a,b)$
the {\em normalised cusp-width pair} of $\Gamma$. 
\end{definition}

The main tool for proving Theorem~\ref{enough_parabolics}  
will be the following proposition.

\begin{proposition}\label{thestep}
Let $a$ and $b$ be the widths of the cusps $0$ and $\infty$, respectively, of
a finite index subgroup $\Gamma$ of $\emslzwei{\ZZ}$.
Suppose that we can decompose 
the Wohlfahrt level $l$ of $\Gamma$ as $l = N\cdot M$ 
such that 
\begin{itemize}
\item 
  $N$ and $M$ are relatively prime and
\item 
  $n = \emlcm(a,b)$ divides $N$.
\end{itemize}
Then we have in $\emslzwei{\ZZ/l\ZZ} = \emslzwei{\ZZ/N\ZZ} \times \emslzwei{\ZZ/M\ZZ}$: 
\[p_l(\Gamma) \supseteq  \{I\} \times \emslzwei{\ZZ/M\ZZ}.\]
\end{proposition}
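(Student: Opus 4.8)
The plan is to exploit the two parabolic elements that $\Gamma$ must contain by virtue of its cusp widths at $0$ and $\infty$, together with the fact that $N$ and $M$ are coprime, to produce enough elements of $p_l(\Gamma)$ to generate the full factor $\{I\}\times\slzwei{\ZZ/M\ZZ}$. First I would record the concrete parabolic elements: since $\Gamma$ has width $b$ at the cusp $\infty$, the matrix $T^{b}=\bpm 1&b\\0&1\epm$ lies in $\Gamma$; since $\Gamma$ has width $a$ at the cusp $0$, and the cusp $0$ is the image of $\infty$ under $S=\bpm 0&-1\\1&0\epm$, the group $\Gamma$ contains $S\,T^{a}\,S^{-1}=\bpm 1&0\\-a&1\epm$, i.e. $(T')^{-a}$ in the notation of (\ref{matrices}). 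Hence $p_l(\Gamma)$ contains the images of $T^{b}$ and $(T')^{a}$ (replacing $-a$ by $a$ costs nothing since we may pass to the group generated, or simply take inverses). Because $n=\lcm(a,b)$ divides $N$, both $a$ and $b$ divide $N$; consequently in the factor $\slzwei{\ZZ/N\ZZ}$ the reductions of $T^{b}$ and $(T')^{a}$ are the identity $I$, so under the isomorphism $\slzwei{\ZZ/l\ZZ}\cong\slzwei{\ZZ/N\ZZ}\times\slzwei{\ZZ/M\ZZ}$ these elements sit inside $\{I\}\times\slzwei{\ZZ/M\ZZ}$.

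Next I would identify exactly which elements of $\{I\}\times\slzwei{\ZZ/M\ZZ}$ we have obtained. Under the chosen isomorphism, $p_l(T^{b})$ corresponds to $(I,\bpm 1&\bar b\\0&1\epm)$ where $\bar b$ is the class of $b$ in $\ZZ/M\ZZ$, and $p_l((T')^{a})$ corresponds to $(I,\bpm 1&0\\\bar a&1\epm)$ with $\bar a$ the class of $a$ in $\ZZ/M\ZZ$. The key number-theoretic point is that $\bar a$ and $\bar b$ are units in $\ZZ/M\ZZ$: indeed $a$ divides $n$ which divides $N$, so $a$ is coprime to $M$ (as $N$ and $M$ are coprime), and likewise for $b$. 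Therefore $\bpm 1&\bar b\\0&1\epm$ generates the full group $\{\bpm 1&t\\0&1\epm : t\in\ZZ/M\ZZ\}$ of upper unipotents, and similarly $\bpm 1&0\\\bar a&1\epm$ generates all lower unipotents in $\slzwei{\ZZ/M\ZZ}$. Consequently $p_l(\Gamma)$ contains, inside the second factor, every elementary matrix of $\slzwei{\ZZ/M\ZZ}$.

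Finally I would invoke the standard fact that $\slzwei{\ZZ/M\ZZ}$ is generated by its elementary (unipotent) matrices $\bpm 1&1\\0&1\epm$ and $\bpm 1&0\\1&1\epm$ — this holds for every $M$, e.g. because $\slzwei{\ZZ}$ is generated by $T$ and $T'$ and reduction mod $M$ is surjective. Hence the subgroup of $p_l(\Gamma)$ generated by $p_l(T^{b})$ and $p_l((T')^{a})$ is all of $\{I\}\times\slzwei{\ZZ/M\ZZ}$, which gives the claimed inclusion $p_l(\Gamma)\supseteq\{I\}\times\slzwei{\ZZ/M\ZZ}$.

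I expect the only genuinely delicate point to be the bookkeeping in the first two paragraphs: making sure that ``width $a$ at the cusp $0$'' really delivers $\bpm 1&0\\a&1\epm\in\Gamma$ (one has to check the conjugating matrix and that widths at $0$ and $\infty$ are computed with respect to the generators $T$ and $STS^{-1}$ as set up in the Preliminaries), and then tracking these matrices correctly through the Chinese-remainder isomorphism $\slzwei{\ZZ/l\ZZ}\cong\slzwei{\ZZ/N\ZZ}\times\slzwei{\ZZ/M\ZZ}$ so that the $N$-components are genuinely trivial. Once those identifications are pinned down, the coprimality of $a,b$ to $M$ and the generation of $\slzwei{\ZZ/M\ZZ}$ by unipotents finish the argument with no further computation.
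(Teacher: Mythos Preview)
Your overall strategy is the same as the paper's, but the argument as written contains a genuine error in the second paragraph. From $b\mid n\mid N$ you conclude that the reduction of $T^{b}$ in $\slzwei{\ZZ/N\ZZ}$ is the identity. This is the wrong way round: the $N$-component of $T^{b}$ is $\bpm 1&\,b\bmod N\\0&1\epm$, which equals $I$ only when $N\mid b$, not when $b\mid N$. (For instance $b=2$, $N=6$ already fails.) The same mistake applies to $(T')^{a}$. Consequently the elements $p_l(T^{b})$ and $p_l((T')^{a})$ do \emph{not} lie in $\{I\}\times\slzwei{\ZZ/M\ZZ}$, and the group they generate need not be contained there, so your final paragraph does not deliver the inclusion.

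The repair is immediate and is exactly what the paper does: raise to a suitable power so that the exponent becomes a multiple of $N$. Since $b\mid N$, the matrix $T^{N}=(T^{b})^{N/b}$ is still in $\Gamma$, and now its $N$-component is genuinely $I$ while its $M$-component is $\bpm 1&\,N\bmod M\\0&1\epm$ with $N$ a unit in $\ZZ/M\ZZ$; likewise $(T')^{N}\in\Gamma$. From there your generation argument goes through unchanged. (The paper goes one step further and multiplies the exponent by a B\'ezout coefficient $k$ with $kN\equiv 1\bmod M$, so as to hit $(I,T)$ and $(I,T')$ on the nose, but this is cosmetic.) You even flagged the triviality of the $N$-components as the delicate point in your closing paragraph; it is precisely there that the slip occurred.
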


\begin{proof}
Write $N = n'\cdot n$ and $n = a\cdot b' = a'\cdot b$
with $n'$, $a'$ and  $b'$ in $\NN$.
Let us first show that  the element $(I, T)$ of 
$\slzwei{\ZZ/N\ZZ} \times \slzwei{\ZZ/M\ZZ}$ lies in $p_l(\Gamma)$. 
Since $N$ and $M$ are relatively prime, there are $k$ and $k'$ in $\ZZ$
such that $k\cdot N + k'\cdot M = 1$ and thus 
$k\cdot N \equiv 1 \mbox{ mod } M$.\\
Since $a$ is the width of the cusp $\infty$ with respect
to $\Gamma$, we have in particular that
$T^a$ and thus also $T^{a\cdot b' \cdot n' \cdot k}$ is in $\Gamma$.
Thus 
\[
  p_l(T^{a\cdot b' \cdot n' \cdot k})  = 
  \bpm 1 & Nk \\ 0 & 1 \epm = 
  (\bpm 1 & 0 \\ 0 & 1 \epm, \bpm 1 & 1 \\ 0 & 1 \epm) 
  \in \slzwei{\ZZ/N\ZZ} \times \slzwei{\ZZ/M\ZZ}
\]
lies in $p_l(\Gamma)$.\\
Starting with the fact that $T'^{\;b}$ is in $\Gamma$, 
we obtain in the same way that 
\[(\bpm 1 & 0 \\ 0 & 1 \epm, \bpm 1 & 0 \\ 1 & 1 \epm) \]
lies in $p_l(\Gamma)$ and thus we conclude the claim.\\
\end{proof}

\begin{proof}[Proof of Theorem~\ref{enough_parabolics}]
Let $l$ be the common Wohlfahrt level of $\Gamma_1$ and $\Gamma_2$.
Let $n_1 = p_1^{k_1}\cdots p_r^{k_r}$
be the prime factorisation. By the definition of the Wohlfahrt level
$l$ we have that
$n_1$ divides $l$ and thus the prime factorisation
of $l$ can be written as 
\[l = p_1^{m_1} \cdot \ldots \cdot p_r^{m_r}
 \cdot p_{r+1}^{m_{r+1}}\cdot \ldots \cdot p_{r+s}^{m_{r+s}}\]
with $k_1 \leq m_1$, \ldots, $k_r \leq m_r$.
Define $N = p_1^{m_1} \cdot \ldots \cdot p_r^{m_r}$
and $M =  p_{r+1}^{m_{r+1}}\cdot \ldots \cdot p_{r+s}^{m_{r+s}}$. 
In particular, we have $l = M\cdot N$, $N$ and $M$ are relatively prime
and $n_1$ divides $N$. It follows from Proposition~\ref{thestep}
that $p_l(\Gamma_1)$ contains $I \times \slzwei{\ZZ/M\ZZ}$.\\
The same is true for $\Gamma_2$ instead of $\Gamma_1$
since these two groups are conjugated, i.e. $p_l(\Gamma_2)$ 
contains $I \times \slzwei{\ZZ/M\ZZ}$.\\
On the other hand, since $n_1$ and $n_2$ are relatively prime,
it follows that $\gcd(n_2,N) = 1$. Furthermore we have that
$n_2$ divides $l$, as $l$ is the Wohlfahrt level of $\Gamma_2$.
Thus $n_2$ in particular divides $M$. We obtain again from 
Proposition~\ref{thestep} that $p_l(\Gamma_2)$ contains 
$\slzwei{\ZZ/N\ZZ} \times \{I\}$. Again by conjugation 
the same is true for $p_l(\Gamma_1)$.
Altogether, we obtain that both groups $p_l(\Gamma_1)$ 
and $p_l(\Gamma_2)$ contain the full group
$\slzwei{\ZZ/N\ZZ} \times \slzwei{\ZZ/M\ZZ} = \slzwei{\ZZ/l\ZZ}$.
\end{proof}

Motivated by the proof of Theorem~\ref{enough_parabolics}
we introduce the following notation.

\begin{definition}\label{mpde}
Suppose that $n$ and $N$ are two numbers such that $n$ divides $N$.
Let $t$ be the largest divisor of $N$ 
such that $t$ and $n$ have the same prime divisors.
We call $t$ the {\em maximal prime divisor equivalent}
of $n$ in $N$ and denote it by $t = \mpde_N(n)$. Observe
that we in particular have that $n$ divides $t$, that $n$ and $t$ have
the same prime divisors and that $t$ and $N/t$ are relatively prime.
\end{definition}

In Section~\ref{hone} we will further need a statement 
similar to Theorem~\ref{enough_parabolics} with weaker prerequisites, 
which we will get from Lemma~\ref{contains}.

\begin{lemma}\label{contains}
Suppose that we are in the situation of Theorem~\ref{enough_parabolics}
except that in (\ref{relativelyprime})  it is not given
that $n_1$ and $n_2$ are relatively prime. I.e.
we have two conjugated groups $\Gamma_1$ and $\Gamma_2$
with normalised cusp-width pairs  $(a_1,b_1)$ and $(a_2,b_2)$,
respectively, and $n_1 = \emlcm(a_1,b_1)$.
 Define 
$N = \emmpde_l(n_1)$ (see Definition~\ref{mpde}) and $M = l/N$ as in the proof of the theorem. 
Let furthermore $g_1 = \gcd(a_2,N)$ 
and $g_2 = \gcd(b_2,N)$. Then we have: 
\[
  \bpm 1 & g_1\\ 0   & 1 \epm \mbox{ and } 
  \bpm 1 & 0  \\ g_2 & 1 \epm \mbox{ are in } 
  p_l(\Gamma_2).
\]
\end{lemma}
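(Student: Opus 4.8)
The plan is to mimic the proof of Proposition~\ref{thestep} (and hence of Theorem~\ref{enough_parabolics}) but to keep track of what survives when $n_1 = \lcm(a_1,b_1)$ and the cusp widths $a_2$, $b_2$ of the conjugate $\Gamma_2$ are no longer assumed to divide $N = \mpde_l(n_1)$. Recall from the proof of Theorem~\ref{enough_parabolics} that $N = \mpde_l(n_1)$ and $M = l/N$ are relatively prime, so by the Chinese Remainder Theorem $\slzwei{\ZZ/l\ZZ} = \slzwei{\ZZ/N\ZZ} \times \slzwei{\ZZ/M\ZZ}$, and we work with this decomposition throughout.

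First I would produce the element of $p_l(\Gamma_2)$ coming from the cusp of width $a_2$ at $\infty$. Since $a_2$ is the width of that cusp, $T^{a_2} \in \Gamma_2$, hence $T^{m a_2} \in \Gamma_2$ for every $m \in \NN$, and therefore $p_l(T^{m a_2}) = \bpm 1 & m a_2 \\ 0 & 1 \epm \in p_l(\Gamma_2)$. Now I want to choose $m$ so that $m a_2 \equiv g_1 \pmod N$ and $m a_2 \equiv 0 \pmod M$, which by CRT pins down the class of $m a_2$ modulo $l$ and gives the image $(\bpm 1 & g_1 \\ 0 & 1 \epm, I)$. The second congruence is easy: take $m$ a multiple of $M$, say $m = M m'$; since $\gcd(M,N) = 1$, multiplication by $M$ is invertible mod $N$, so it remains to solve $M m' a_2 \equiv g_1 \pmod N$. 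Because $g_1 = \gcd(a_2, N)$, the element $a_2$ generates the subgroup $g_1 \ZZ / N\ZZ$ of $\ZZ/N\ZZ$, so $g_1$ itself lies in the image of multiplication-by-$a_2$; combined with invertibility of $M$ mod $N$ this produces the desired $m'$. This is the one genuinely new point compared to Proposition~\ref{thestep}: there one had $n \mid N$ so the relevant element was already $T^{\text{(multiple of }N)}$ mapping to $\bpm 1 & N k\\0&1\epm$, i.e. to $(I, T^k)$, whereas here we need the $\gcd$-generation argument to hit $g_1$ exactly in the $\slzwei{\ZZ/N\ZZ}$-factor. So $\bpm 1 & g_1 \\ 0 & 1 \epm \in p_l(\Gamma_2)$, where I use the obvious identification of $\bpm 1 & g_1 \\ 0 & 1 \epm \in \slzwei{\ZZ/l\ZZ}$ with $(\bpm 1 & g_1 \\ 0 & 1 \epm, I)$.

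Then I would run the symmetric argument with the lower-triangular matrix $T'$: since $b_2$ is the width of the cusp $0$ of $\Gamma_2$, we have $T'^{\,b_2} \in \Gamma_2$ and hence $T'^{\,m b_2} \in \Gamma_2$ for all $m$; choosing $m$ a multiple of $M$ and then solving the resulting congruence mod $N$ using $g_2 = \gcd(b_2, N)$ exactly as above yields $\bpm 1 & 0 \\ g_2 & 1 \epm \in p_l(\Gamma_2)$. (One should note that the width of the cusp at $0$ being $b_2$ means, by the discussion of cusp widths in Section~\ref{cylinders} and in the ``Cusps and widths of cusps'' subsection, precisely that $T'^{\,b_2}$ and no smaller power of $T'$ lies in $\Gamma_2$; I only need the ``$T'^{\,b_2} \in \Gamma_2$'' direction here.) Combining the two displays gives the claim.

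The main obstacle — really the only thing beyond bookkeeping — is the congruence-solving step: verifying that $g_1 = \gcd(a_2, N)$ lies in the image of multiplication-by-$a_2$ on $\ZZ/N\ZZ$ and that this can be combined with invertibility of $M$ modulo $N$ (which is where relative primality of $N$ and $M$, established in the proof of Theorem~\ref{enough_parabolics}, is used). Everything else — the reduction of $\slzwei{\ZZ/l\ZZ}$ to the product, the passage from $T^{a_2}\in\Gamma_2$ to its powers, and the symmetry between the $T$- and $T'$-arguments — is identical in spirit to Proposition~\ref{thestep}, so I would state it briefly and refer back there rather than repeat it. I would not bother to also extract information in the $\slzwei{\ZZ/M\ZZ}$-factor (as was done in Theorem~\ref{enough_parabolics} using $n_2 \mid M$): the weaker conclusion recorded in Lemma~\ref{contains} is exactly what is needed in Section~\ref{hone}, and dropping the relative-primality hypothesis on $n_1, n_2$ is precisely what forces us to settle for these two unipotent generators rather than the full group.
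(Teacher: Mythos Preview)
Your argument has a genuine gap hiding in the sentence ``where I use the obvious identification of $\bpm 1 & g_1 \\ 0 & 1 \epm \in \slzwei{\ZZ/l\ZZ}$ with $(\bpm 1 & g_1 \\ 0 & 1 \epm, I)$''. That identification is false: under the CRT isomorphism $\slzwei{\ZZ/l\ZZ} \cong \slzwei{\ZZ/N\ZZ} \times \slzwei{\ZZ/M\ZZ}$ the matrix $\bpm 1 & g_1 \\ 0 & 1 \epm$ maps to $\bigl(\bpm 1 & g_1 \\ 0 & 1 \epm,\ \bpm 1 & g_1 \\ 0 & 1 \epm\bigr)$, and since $g_1$ divides $N$ with $\gcd(N,M)=1$ we have $\gcd(g_1,M)=1$, so the second component is \emph{not} the identity unless $M=1$. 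What your congruence-solving actually produces is the element $\bigl(\bpm 1 & g_1 \\ 0 & 1 \epm,\ I\bigr)$, which is a different element of $\slzwei{\ZZ/l\ZZ}$ than the one claimed in the lemma.

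A telltale sign is that you never use the hypothesis that $\Gamma_1$ and $\Gamma_2$ are conjugate, nor the cusp widths $(a_1,b_1)$ of $\Gamma_1$. The paper does: it applies Proposition~\ref{thestep} to $\Gamma_1$ (using $n_1 \mid N$) to get $\{I\}\times\slzwei{\ZZ/M\ZZ} \subseteq p_l(\Gamma_1)$, then transfers this to $p_l(\Gamma_2)$ by conjugation, using that $\{I\}\times\slzwei{\ZZ/M\ZZ}$ is normal in $\slzwei{\ZZ/l\ZZ}$. With that in hand, one can take your element $\bigl(\bpm 1 & g_1 \\ 0 & 1 \epm,\ I\bigr)$ (or, as the paper does, any $T^{ka_2}$ hitting $g_1$ in the first factor and \emph{something} in the second) and multiply by a suitable element of $\{I\}\times\slzwei{\ZZ/M\ZZ}$ to correct the second factor to $p_M(T^{g_1})$, yielding $p_l(T^{g_1})$ as required. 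Your remark that you ``would not bother to also extract information in the $\slzwei{\ZZ/M\ZZ}$-factor'' is thus precisely where the proof breaks: that information is the missing step, and it is the place where $\Gamma_1$ enters.
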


\begin{proof}
By its definition $g_1 \equiv k\cdot a_2$
modulo $N$ for some $k \in \NN$.\\
Thus it follows from
$T^{a_2} \in \Gamma_2$
that $p_l(\Gamma_2)$ contains
\begin{equation}\label{geins}
  \bpm 1 & k\cdot a_2\\ 0   & 1 \epm = 
  (\bpm 1 & g_1\\ 0   & 1 \epm, A_2) \in 
  \slzwei{\ZZ/N\ZZ} \times \slzwei{\ZZ/M\ZZ}
\end{equation}
with some matrix $A_2$ in $\slzwei{\ZZ/M\ZZ}$.
As in the proof of Theorem~\ref{enough_parabolics}
it follows from Proposition~\ref{thestep}
that $p_l(\Gamma_1)$ and thus also $p_l(\Gamma_2)$
contains $\{I\} \times \slzwei{\ZZ/M\ZZ}$.
Hence $p_l(\Gamma_2)$ contains in particular $(I,A_2^{-1}\cdot p_M(T^{g_1}))$. 
From this and (\ref{geins}) we obtain
the first part of the claim. The second part
follows in the same way.
\end{proof}


\section{The $L$-origamis}\label{hone}

In this section we show that the family of all Veech groups of origamis
in $\omegam{2}(2)$ is an honest family of non-congruence groups (compare Definition~\ref{honest-family}). 
We furthermore show for a subclass
of them that their Veech groups are even totally non-congruence groups.\\

Recall that we have an explicit classification of $\slzwei{\RR}$-orbits 
of origamis
in $\omegam{2}(2)$ by Hubert/Leli\`evre (for the case that
the number of squares is prime)
and McMullen (in full generality), see \cite[Thm.1.1]{HL} 
and \cite[Cor. 1.2 and Section 6]{McM1}.
They distinguish the orbits only by the number of squares of the origamis 
and the number of integer Weierstra\ss\ points. 
Recall that in genus 2 the {\em Weierstra\ss\ points} are the six fixed
points of the hyperelliptic involution, which is in the case of an origami
the unique affine homeomorphism with derivative $-I$ of order 2 such that
the quotient by it has genus 0. Being 
an {\em integer Weierstra\ss\ point} means that the point 
in addition is a vertex of one of the squares
of the origami (see Example~\ref{lexample} and Example~\ref{cr}).

\begin{theoremB}[Hubert/Leli\`evre, McMullen]
The set of primitive origamis with $n$ squares form one 
single $\slzwei{\RR}$-orbit,
if $n$ is even or 3. They form
two orbits called $A_n$ and $B_n$, if $n$ is odd and unequal to 3.
An origami is in $A_n$, if it has one integer Weierstra\ss\ point
and in $B_n$, if it has three integer Weierstra\ss\ points.
\end{theoremB}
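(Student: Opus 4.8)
The plan is to recast the statement as a count of $\slzwei{\ZZ}$-orbits on the finite set $\mathcal{O}_n$ of isomorphism classes of primitive origamis in $\omegam{2}(2)$ with $n$ squares. By the correspondence recalled in Section~\ref{slzweiz-action}, two such origamis lie in the same $\slzwei{\RR}$-orbit (equivalently, on the same Teichm\"uller curve) exactly when they lie in the same $\slzwei{\ZZ}$-orbit, so it is enough to show that $\slzwei{\ZZ}\backslash\mathcal{O}_n$ has one element for $n$ even or $n=3$ and exactly two elements otherwise.

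The first step is to isolate the invariant. For $O\in\mathcal{O}_n$ let $\tau$ be the hyperelliptic involution -- the canonical affine involution of $O$ with derivative $-I$ -- and let $w(O)$ be the number of its fixed points that are vertices of squares, i.e. the number of integer Weierstra\ss\ points. Then (a) $w$ is constant on $\slzwei{\ZZ}$-orbits: since $\tau$ is canonical it commutes with the affine homeomorphisms realising the $\slzwei{\ZZ}$-action, and that action sends vertices to vertices; and (b) $w(O)\in\{1,3\}$ whenever $n$ is odd. For (b) one uses the covering $p\colon X\to E$: its branch point is one of the four two-torsion points of $E$, the vertices of $O$ are exactly $p^{-1}(\text{branch point})$, and over each of the other three two-torsion points $p$ is unramified with fibre of odd size $n$; a parity count on the action of $\tau$ on these four fibres shows that at least one of the six Weierstra\ss\ points lies over each non-branch two-torsion point (so $w\leq 3$) and that $w$ is odd, whence $w\in\{1,3\}$. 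In particular $|\slzwei{\ZZ}\backslash\mathcal{O}_n|\leq 2$ for every $n$.

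It remains to decide, for each $n$, how many values of $w$ are realised and that they separate the orbits. For this I would put every $O\in\mathcal{O}_n$ into a cylinder normal form: using $S$ and powers of $T$ and $T'$, whose effect on the defining pair of permutations is recorded in Section~\ref{slzweiz-action}, one first arranges that the horizontal direction has the minimal number of cylinders -- which in $\omegam{2}(2)$ is $1$ or $2$ -- and then, normalising heights and twists, shows that $O$ is $\slzwei{\ZZ}$-equivalent to an $L$-shaped origami $L(p,q)$ built from a horizontal arm of $p$ and a vertical arm of $q$ squares (so $p,q\geq 2$ and $p+q-1=n$). Now (i) a direct computation places the six Weierstra\ss\ points of $L(p,q)$ at half-integer coordinates and evaluates $w(L(p,q))$ in terms of the parities of $p$ and $q$, which shows that for $n$ odd and $n\geq 5$ both values $1$ and $3$ occur, while for $n$ even and for $n=3$ only one value is possible; and (ii) one must prove the converse relations among the $L$-shapes, i.e. that $L(p,q)$ and $L(p',q')$ with $p+q=p'+q'$ are $\slzwei{\ZZ}$-equivalent precisely when $w(L(p,q))=w(L(p',q'))$. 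Combining (i), (ii) and (b) gives the asserted counts, and the defining property of $w$ on $A_n$ and $B_n$ is then immediate from step (i).

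The main obstacle is (ii): proving that $w$ is a \emph{complete} invariant, i.e. that any two $n$-square origamis in $\omegam{2}(2)$ with the same number of integer Weierstra\ss\ points are joined by a chain of $\slzwei{\ZZ}$-moves. Concretely one must exhibit, between any two $L$-shapes sharing a value of $w$, an explicit sequence of horizontal and vertical Dehn twists (powers of $T$ and $T'$) interleaved with applications of $S$, and control it uniformly in $n$; this behaves like a continued-fraction reduction of the twist parameters, and the degenerate cases ($n=3$, and the parity bookkeeping that pins down the single value of $w$ for even $n$) have to be handled separately. Everything else -- the cylinder-decomposition reductions and the finite check of Weierstra\ss-point positions -- is routine bookkeeping.
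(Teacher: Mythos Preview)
The paper does not prove Theorem~B at all: it is quoted as a result of Hubert/Leli\`evre and McMullen, with explicit references to \cite[Thm.~1.1]{HL} and \cite[Cor.~1.2 and Section~6]{McM1}, and is used as a black box (together with Remark~\ref{lwpts} and Example~\ref{cr}) in the proof of Theorem~\ref{stratumlab}. So there is no proof in this paper to compare your proposal against.

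On your proposal itself: the overall strategy is indeed the one in the cited papers (reduce to $L$-shaped prototypes, use the number of integer Weierstra\ss\ points---equivalently McMullen's spin parity---as the separating invariant, and prove transitivity within each value of the invariant). However, there is a genuine logical slip. From (a) and (b) you conclude ``In particular $|\slzwei{\ZZ}\backslash\mathcal{O}_n|\leq 2$ for every $n$''. This does not follow: an $\slzwei{\ZZ}$-invariant that takes at most two values gives a \emph{lower} bound of $2$ on the number of orbits once both values are realised, never an upper bound. The upper bound is precisely the completeness statement (ii) that you later flag as the main obstacle, so it cannot be asserted before (ii) is done. Moreover, (b) only treats odd $n$, so even as stated the inequality has no support for even $n$ at that point. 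The honest structure is: (a)+(b)+(i) show there are \emph{at least} two orbits for odd $n\geq 5$; everything about ``at most'' lives in (ii).
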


\begin{example}\label{lexample}
Figure~\ref{lfuenf} shows two origamis with 5 squares. Opposite
edges are glued. Both origamis lie
in $\omegam{2}(2)$. The left one has three integer Weierstra\ss\ points, the right one
only one. Thus the left one lies in the orbit $B_n$ and the right one
in the orbit $A_n$.\\

\begin{center}
\setlength{\unitlength}{1cm}
\begin{picture}(4,3)
\put(0,0){\framebox(1,1){}}
\put(0,1){\framebox(1,1){}}
\put(0,2){\framebox(1,1){}}
\put(1,0){\framebox(1,1){}}
\put(2,0){\framebox(1,1){}}
\put(0,0){\circle*{.2}}
\put(2,0){\circle*{.2}}
\put(0,2){\circle*{.2}}
\put(.5,.5){\circle*{.2}}
\put(2,.5){\circle*{.2}}
\put(.5,2){\circle*{.2}}
\end{picture}
\hspace*{1cm}
\begin{picture}(4,3)
\put(0,0){\framebox(1,1){}}
\put(0,1){\framebox(1,1){}}
\put(1,0){\framebox(1,1){}}
\put(2,0){\framebox(1,1){}}
\put(3,0){\framebox(1,1){}}
\put(0 , 0){\circle*{.2}}
\put(.5,.5){\circle*{.2}}
\put(.5,1.5){\circle*{.2}}
\put(0,1.5){\circle*{.2}}
\put(2.5,0){\circle*{.2}}
\put(2.5,.5){\circle*{.2}}
\end{picture}
\end{center}

\begin{center}
\refstepcounter{diagramm}{\it Figure \arabic{diagramm}}:\label{lfuenf}
{\em The $L$-shaped origamis $L(3,3)$ and $L(4,2)$ and their Weiterstra\ss\ points. 
      Opposite edges are glued.
     Left one is in $B_n$, right one is in $A_n$.}\\
\end{center}
\end{example}

\begin{remark}\label{lwpts}
One easily observes from Theorem~B that in $\omegam{2}(2)$
each origami-orbit can be represented 
by some $L$-shaped origami $L_{a,b}$ $(a,b \geq 2)$ with $n = a+b-1$
squares, see Figure~\ref{lfuenf}.
If $n$ is odd, then $L_{a,b}$ has 1 integer Weierstra\ss\ point (i.e. belongs
to $A_n$) if $a$ and $b$ are 
even. It has 3 integer Weierstra\ss\ points (i.e. belongs to $B_n$), if $a$ and $b$ are odd.
\end{remark}

One furthermore directly observes from the $L$-shape 
that $\Gamma(L_{a,b})$ contains the parabolic elements
$T^a$ and $T'^{\;b}$ (compare Section~\ref{cylinders}). Moreover, they generate the
corresponding maximal cyclic parabolic subgroup of $\Gamma(L_{a,b})$,
which can be seen as follows.
Any
affine homeomorphism of $L_{a,b}$ whose derivative  
$A \in \Gamma(L_{a,b})$ is parabolic with eigenvector
$1\choose{0}$ has to permute the horizontal saddle connections.
We have three of them. Among them there is a unique one
which lies on the boundary of only one cylinder.
This property is preserved by the affine homeomorphism.
Thus this segment has to be fixed pointwise  and with it
the full boundary of this cylinder.
Hence the affine homeomorphism acts as the power of a Dehn twist 
on this cylinder and $A$ is a power of $T^a$. Hence $T^a$
generates the corresponding cyclic parabolic subgroup of $\Gamma(L_{a,b})$.
We similarly obtain the corresponding statement for $T'^{\;b}$ and with this
the following remark. 

\begin{remark}\label{twocusps}
The widths of $\Gamma(L_{a,b})$ at the cusps $\infty$ and $0$ 
are $a$ and $b$, respectively.
\end{remark}

\begin{example}\label{cr}
Figure~\ref{figurecr} shows an other type of  
origami in $\omegam{2}(2)$. The origami $\Cr_{2,j}$ (with $j$ squares)
is defined by the two permutations
$\sigma_a = (1,2,\ldots, j)$ and $\sigma_b = (1,2)$.
If $j$ is odd, then $\Cr_{2,j}$ has 1 integer Weierstra\ss\ point
and thus belongs to the orbit $A_j$. If $j$ is even, then 
it has 2 integer Weierstra\ss\ points.\\
  
\begin{center}
\setlength{\unitlength}{1cm}
\begin{picture}(7,1)
\put(0,0){\framebox(1,1){1}}
\put(1,0){\framebox(1,1){2}}
\put(2,0){\framebox(1,1){\ldots}}
\put(3,0){\framebox(1,1){}}
\put(4,0){\framebox(1,1){}}
\put(5,0){\framebox(1,1){\ldots}}
\put(6,0){\framebox(1,1){j}}
\put(0,0){\circle*{.2}}
\put(0.5,0){\circle*{.2}}
\put(1,0.5){\circle*{.2}}
\put(1.5,0){\circle*{.2}}
\put(4.5,0){\circle*{.2}}
\put(4.5,.5){\circle*{.2}}
\put(.3,-.4){$a$}
\put(1.3,1.1){$a$}
\put(1.3,-.4){$b$}
\put(.3,1.1){$b$}
\end{picture}
\hspace*{5mm}
\begin{picture}(6,1)
\put(0,0){\framebox(1,1){1}}
\put(1,0){\framebox(1,1){2}}
\put(2,0){\framebox(1,1){\ldots}}
\put(3,0){\framebox(1,1){}}
\put(4,0){\framebox(1,1){\ldots}}
\put(5,0){\framebox(1,1){j}}
\put(0,0){\circle*{.2}}
\put(0.5,0){\circle*{.2}}
\put(1,0.5){\circle*{.2}}
\put(1.5,0){\circle*{.2}}
\put(4,0){\circle*{.2}}
\put(4,.5){\circle*{.2}}
\put(.3,-.4){$a$}
\put(1.3,1.1){$a$}
\put(1.3,-.4){$b$}
\put(.3,1.1){$b$}
\end{picture}
\end{center}
\vspace*{3mm}
\begin{center}
\refstepcounter{diagramm}{\it Figure \arabic{diagramm}}:\label{figurecr}
{\em The origamis $\Cr_{2,7}$ and $\Cr_{2,6}$.
  Edges with same labels
and unlabelled opposite edges are identified. The
left one is in $A_7$, the right one has an even number of squares.}\\
\end{center}

One again can directly read off from Figure~\ref{figurecr} that $T^j$ and $T'^2$
are in the Veech group of $\Cr_{2,j}$. Furthermore 
$\Cr_{2,j}$ decomposes into two maximal vertical cylinders,
one of height 1 and circumference 2 and one of height $j-2$ and circumference
1. On the first one we have a vertical saddle connection which lies
on the boundary of only this cylinder. Hence 
a parabolic affine homeomorphism with the vertical direction as eigen direction
has to fix this segment and thus has to act on this cylinder as  multi Dehn twist.
It follows that $T'^{\,2}$ generates the corresponding maximal cyclic parabolic subgroup of
$\Gamma(\Cr_{2,j})$.
If $j \geq 4$, furthermore  
a parabolic affine homeomorphism with the horizontal direction as eigen direction
has to preserve the unique horizontal saddle connection of length $j-2$.  
Hence $T^{j}$ generates the corresponding maximal cyclic parabolic subgroup
of $\Gamma(\Cr_{2,j})$. In particular we have that if $j \geq 4$, then
the normalised pair of cusp-widths for $\Gamma(\Cr_{2,j})$ is $(j,2)$. 
\end{example}

We now want to study  the orbits of origamis in $\omegam{2}(2)$. 
Table~\ref{tabble} shows up to 11 squares for each orbit 
the index of the Veech group $\Gamma$ in $\slzwei{\ZZ}$, 
the genus and number of cusps of $\HH/\Gamma$, 
the level index of $\Gamma$ and its deficiency. Observe that 
these data are
stable under conjugation of $\Gamma$ and therefore do not depend
on the origami but only on the orbit. From the table
one guesses the result of Theorem~\ref{stratumlab} 
which we prove in the following.

\begin{table}
\begin{tabular}{l|r|r|r|r|r|r}
$\sharp$ squares, orbit  &  genus $g$ &    
$\sharp$ cusps $s$ &  level $l$  & index $d$ &  level index $e$ & deficiency $f$\\ 
\hline
   3           &0  &      2  &     2  &     3 &    3 &    1  \\
   4           &0  &      3  &    12  &     9 &    3 &    3  \\
   5,\; $B_5$  &0  &      3  &    15  &     9 &    1 &    9  \\
   5,\; $A_5$  &0  &      5  &    60  &    18 &    3 &    6  \\
   6           &0  &      8  &    60  &    36 &    3 &    12 \\
   7,\; $B_7$  &0  &      8  &   105  &    36 &    1 &    36 \\
   7,\; $A_7$  &0  &     10  &   420  &    54 &    3 &    18 \\
   8           &1  &     17  &   840  &   108 &    3 &    36 \\
   9,\; $B_9$  &0  &     14  &   630  &    81 &    1 &    81 \\
   9,\; $A_9$  &2  &     16  &  2520  &   108 &    3 &    36 \\
  10           &4  &     30  &  2520  &   216 &    3 &    72 \\
  11,\; $B_{11}$&3 &     26  &  6930  &   180 &    1 &   180 \\
  11,\; $A_{11}$&6 &     26  & 27720  &   225 &    3 &    75 \\  
\end{tabular}
\vspace*{3mm}
\caption{For the Veech groups $\Gamma$ of th $L$-origamis up to 11 squares:
  genus $g$ of $\HH/\Gamma$, number $s$ of cusps of $\HH/\Gamma$,
  Wohlfahrt level $l$, index $d$, level index $e$ and deficiency $f$ 
  of $\Gamma$. \protect\footnotemark}
\label{tabble}
\label{proj-index}
\end{table}\footnotetext{calculated with the Software {\em Origami Library},
  \url{http://www.math.kit.edu/iag3/seite/ka-origamis/en}
   }

\begin{proof}[Proof of Theorem~\ref{stratumlab} i)] 
We distinguish the three cases $j \equiv 1$ modulo 4 but not 5, 
$j \equiv 3$ modulo 4 but
not 7 and finally  the singular cases $j = 5$ and $j = 7$.\\[2mm]
{\bf First Step:} Suppose that $j \equiv 1$ modulo $4$ with $j \geq 9$.\\
This means that $j = 2a-1$ with $a$ odd. Furthermore $a \geq 5$.\\
From Hubert/Leli{\`e}vre's and McMullen's classification (see Theorem~B) 
of the $\slzwei{\ZZ}$-orbits of origamis in $\omegam{2}(2)$
and Remark~\ref{lwpts} we know that the origamis $L_{a,a}$ and $L_{a+2, a-2}$ both lie in the orbit
$B_j$. Here we need that $a-2 \geq 2$. 
Then the Veech groups $\Gamma_1 = \Gamma(L_{a,a})$ and
$\Gamma_2 = \Gamma(L_{a+2,a-2})$ are conjugated. Let $l$ be their Wohlfahrt
level. By Remark~\ref{twocusps}
we have that the normalised cusp-width pair of $\Gamma_1$ is $(a,a)$
and that of $\Gamma_2$ is $(a+2,a-2)$. Furthermore $n_1 = \lcm(a,a) = a$
and $n_2 = \lcm(a+2,a-2)$ are relatively prime, since
$a$ is odd. Thus we are in the situation of Theorem~\ref{enough_parabolics}
and obtain that $p_l(\Gamma_1) = p_l(\Gamma_2)$ is the full group 
$\slzwei{\ZZ/l\ZZ}$.\\

{\bf Second Step:} Suppose that $j \equiv 3$ modulo $4$ with $j \geq 11$.\\
Thus $j = 2a-1$ with $a$ even and $a \geq 6$.\\
Again we obtain from Hubert/Leli\`{e}vre's and McMullen's classification 
that in this case $L_{a+1,a-1}$ and $L_{a+3, a-3}$ both lie in the orbit
$B_j$. We use the same arguments as before: We consider the Veech groups
$\Gamma_1 = \Gamma(L_{a+1,a-1})$ and
$\Gamma_2 = \Gamma(L_{a+3,a-3})$. They are conjugated and the 
normalised cusp-width pairs are $(a+1,a-1)$ and $(a+3,a-3)$,
respectively. In this case we have $n_1 = \lcm(a+1,a-1)$
and $n_2 = \lcm(a+3,a-3)$.  Observe that $n_1$ and $n_2$
are relatively prime, since
$a$ is even. Thus we are again in the situation of 
Theorem~\ref{enough_parabolics}
and obtain the desired statement.\\

{\bf Third Step:} The remaining cases $j = 7$ and $j=9$ have been
explicitly calculated (see Table~\ref{tabble}).
\end{proof}

We now show the second part of Theorem~\ref{stratumlab}, i.e.
that for the other types of orbits, namely the case $j$ even, the case
$A_j$ with $j$ odd and the special case $j = 3$,
the projection of the Veech group into $\slzwei{\ZZ/l\ZZ}$ 
is not surjective, but it is ''almost surjective''.
More precisely the index of the image is constantly equal to 3.

\begin{proof}[Proof of Theorem~\ref{stratumlab} ii)]
We first show that the level index is at most 3.
We distinguish the following cases:
\begin{itemize}
\item[a)] 
  $j$ is odd, $j \equiv 3$ modulo 4, $j \geq 7$  and the origami lies in $A_j$,
\item[b)]
  $j$ is odd, $j \equiv 1$ modulo 4, $j \geq 9$ and the origami lies in $A_j$,
\item[c)]
  $j$ is even and  $j \geq 6$
\item[d)]
  $j = 3$, $j = 4$ or $j = 5$ and the orbit is $A_5$.  
\end{itemize}

a) Suppose that $j = 2a-1 \geq 7$ with $a$ even, 
then $L_{a,a}$ 
and $\Cr_{2,j}$ lie in the same orbit (see Theorem~B, Remark~\ref{lwpts},
Example~\ref{cr}). The normalised cusp-width pairs of their Veech groups 
are $(a,a)$  (see Remark~\ref{twocusps})
and $(j,2)$ (see Example~\ref{cr}; here we need that $j \neq 3$). Define $N = \mpde_l(a)$ 
(see Definition~\ref{mpde}).
Then $N$ and $a$ have the same 
prime divisors and thus $2 = \gcd(2,a) = \gcd(2,N)$. 
Furthermore $j = 2a-1$ and $a$
are relatively prime, thus $1 = \gcd(j,a) = \gcd(j,N)$. 
It follows from Lemma~\ref{contains} that 
\[
  p_l(T) = \bpm 1&1\\0&1\epm \mbox{ and } 
  p_l(T'^{\,2}) = \bpm 1&0\\2&1\epm \mbox{ lie in } p_l(\Gamma(Cr_{2,j})).\]
Since furthermore $-I \in \Gamma(Cr_{2,j})$, we have that the image $p_l(\Gamma(2))$ in $\slzwei{\ZZ/l\ZZ}$ of the principal
congruence group $\Gamma(2)$  is contained in $p_l(\Gamma(Cr_{2,j}))$.
Thus its index in $\slzwei{\ZZ/l\ZZ}$ is at most 6. 
Since in addition $p_l(T)$ lies in it, 
the index is at most 3 and we have obtained the desired statement.\\

b) Let $j = 2a-1$ with $a$ odd, $a \geq 5$. Then $L_{a-1,a+1}$, $L_{a-3,a+3}$ 
and $Cr_{2,j}$ are in the same orbit.
We have that $2 = \gcd(2,a-1) = \gcd(2,a+1)$ and thus $2 = \gcd(2,N)$, where
$N = \mpde_l(\lcm(a-1,a+1))$. It follows again by Lemma~\ref{contains} that 
$p_l(T'^{\,2}) \in p_l(\Gamma(Cr_{2,j}))$. \\
Furthermore since $j = 2a-1 = 2(a-1) + 1 = 2(a+1)-3$,
we  have that $\gcd(a-1,j) = 1$ and $\gcd(a+1,j)$ divides 3.
Thus $\gcd(N,j)$ is a power $3^k$ of 3. We obtain from
Lemma~\ref{contains} that $p_l(T^{3^k}) \in p_l(\Gamma(Cr_{2,j}))$.\\
Let us now work with $L_{a-3, a+3}$ instead of 
$L_{a-1,a+1}$. We have $\gcd(a-3,j) = \gcd(a-3,2(a-3)+5)$ divides 5 and 
$\gcd(a+3,j) = \gcd(a+3,2(a+3)-7)$ divides 7. Hence $\gcd(\mpde_l(\lcm(a-3,a+3)),j)$
is of the form $5^{k_1}\cdot 7^{k_2}$ and thus by Lemma~\ref{contains} 
we have that $p_l(\Gamma(Cr_{2,j}))$ contains $p_l(T^{5^{k_1}\cdot 7^{k_2}})$. Since
$3^k$ and $5^{k_1}\cdot 7^{k_2}$ are relatively prime, we obtain
that $p_l(T)$ is in $p_l(\Gamma(Cr_{2,j}))$.
We conclude the claim as in the previous case.\\

c) Suppose  now $j = 2a$ with $a \geq 3$.
Then $L_{a,a+1}$, $L_{a-1,a+2}$ and $L_{2,2a-1}$ are in the same orbit. Let $N = \mpde_l(\lcm(a,a+1))$.
We have  that $\gcd(2,N) = 2$. It follows by Lemma~\ref{contains} that
$p_l(T^2)$ lies in $p_l(\Gamma(L_{2,2a-1}))$.\\ 
Furthermore 
$\gcd(2a-1,a) = 1$ and $\gcd(2a-1,a+1) = \gcd(2(a+1)-3,a+1)$ divides 3, thus we
obtain from Lemma~\ref{contains} that $p_l(T'^{\,3^{k_1}})$ is 
in $p_l(\Gamma(L_{2,2a-1}))$ for some $k_1 \in \NN_0$. Let us now work with $L_{a-1,a+2}$.
We have that $\gcd(2a-1,a-1) = \gcd (2(a-1)+1,a-1) = 1$
and $\gcd(2a-1,a+2) = \gcd(2(a+2)-5,a+2)$ divides 5.\\
Thus we obtain by Lemma~\ref{contains}
that $p_l(T'^{5^{k_2}})$ is 
in $p_l(\Gamma(L_{2,2a-1}))$ for some $k_2 \in \NN_0$. Again we use that $3^{k_1}$ and $5^{k_2}$
are relatively prime in order to conclude that $p_l(\Gamma(L_{2,2a-1}))$ contains
$p_l(T')$. Similarly as in the other cases we conclude the claim.\\

d) These singular cases have been explicitly calculated, see Table~\ref{tabble}.
\\

We now show that the level index is at least 3. More precisely,
we show that the Veech group is contained in a subgroup of $\slzwei{\ZZ}$ whose
image in $\slzwei{\ZZ/2\ZZ}$ is of index 3. Since the level $l$ is 
divisible by 2, we obtain the desired property.\\
Recall that in the 
cases which we consider  the number of integer Weierstra\ss\
points of the origami $O$ is 1 or 2. The Weierstra\ss\ points are preimages
of the $2$-division points $\infty$, $A$, $B$ and $C$ on the
elliptic curve $E= \CC/(\ZZ\oplus \ZZ i)$ with marked point 
$\infty = (\bar{0},\bar{0})$ (see Figure~\ref{divisionpoints}).

\begin{center}
\setlength{\unitlength}{2cm}
\begin{picture}(2,2)
\put(1,1){\framebox(1,1){}}
\put(1,1){\circle*{0.1}}
\put(1.5,1){\circle*{0.1}}
\put(1,1.5){\circle*{0.1}}
\put(1.5,1.5){\circle*{0.1}}
\put(.7,.7){$\infty$}
\put(1.3,.7){$A$}
\put(1.3,1.3){$B$}
\put(.7,1.3){$C$}
\end{picture}
\end{center}
\begin{center}
\vspace*{-15mm}
\refstepcounter{diagramm}{\it Figure \arabic{diagramm}}:\label{divisionpoints}
{\em The four $2$-division points of the
  elliptic curve $E = \CC/(\ZZ\oplus\ZZ i)$ 
  with marked point $\infty$.}\\
\end{center}

The integer Weierstra\ss\ points are those 
which are mapped to $\infty$. Since $O$ has 1 or 2 integer
Weierstra\ss\ points, we have 5 or 4 Weierstra\ss\ points which
are preimages of $A$,$B$ or $C$. In particular, one out of the three
points $A$, $B$ and $C$  can be distinguished from the other two by 
the number of Weierstra\ss\
points in its preimage. Since affine maps preserve Weierstra\ss\ points
and each affine map of $X$ descends to $E$, the descend
of an affine map on $X$ has to fix this point. Observe
that for $P \in \{A,B,C\}$
the subgroup of $\slzwei{\ZZ}$ of elements which fix $P$ is 
a congruence group of level 2 and has index $3$ in $\slzwei{\ZZ}$.
This proves the claim.
\end{proof}

\section{One Zero strata}

Theorem~{\ref{enough_parabolics}} and the methods used in Section~\ref{hone}
are quite general. We exemplary use them to construct infinite
families of origamis in each genus $g \geq 3$ whose Veech groups are totally
non-congruence groups and prove with this Theorem~\ref{onezero}. 
More precisely, we introduce an explicit family of origamis $O_{g,n}$ ($g \geq 3$,
$n \geq 3g-2$) whose Veech groups $\Gamma(O_{g,n})$
are totally non-congruence groups if $n$ is prime to  $2$,$3$ and $g-1$.

\begin{definition}\label{defogn}
Let $O_{g,n}$ for $n \geq 3g-2$ and $g \geq 3$ be the origami given by the 
permutations
\[
  \begin{array}{lcl}
    \sigma_a &:=& (1,4,7,\ldots,3g-5, 3g-2,3g-1,\ldots, n) \\
    \sigma_b &:=& (1,2,3)(4,5,6)\ldots(3g-5,3g-4,3g-3).
  \end{array}
\]
The origami is shown in Figure~\ref{ogn}. For easier calculations
later, we write down $\sigma_a$ and $\sigma_b$ also as functions:
\begin{equation}\label{functions}
  \begin{array}{lcl}
    \sigma_a(x) &=&
    \left\{
    \begin{array}{ll}
      x+3, &x \equiv 1 \mod 3 \mbox{ and } x \leq 3g-5\\
      x,   &x \not\equiv 1 \mod 3 \mbox{ and } x \leq 3g-3\\
      x+1, &3g-2 \leq x \leq n-1\\
      1,   &x=n
    \end{array}
    \right.\\\vspace*{2mm}
    \sigma_b(x) &=&
    \left\{
    \begin{array}{ll}
      x+1, &x \not\equiv 0 \mod 3 \mbox{ and } x \leq 3g-3\\
      x-2,   &x \equiv 0 \mod 3 \mbox{ and } x \leq 3g-3\\
      x, & x \geq 3g-2
  \end{array}
    \right.
  \end{array}
\end{equation}
\end{definition}

\vspace*{5mm}

\begin{center}
  \begin{figure}
  \setlength{\unitlength}{1cm}
  \begin{picture}(8,5)
    \put(0,2){\framebox(1,1){1}}
    \put(0,3){\framebox(1,1){2}}
    \put(0,4){\framebox(1,1){3}}
    \put(1,2,){\framebox(1,1){4}}
    \put(1,0){\framebox(1,1){5}}
    \put(1,1){\framebox(1,1){6}}
    \put(2,2,){\framebox(1,1){7}}
    \put(2,3){\framebox(1,1){8}}
    \put(2,4){\framebox(1,1){9}}
    \put(3,2){\framebox(1,1){\ldots}}
    \put(3,0){\framebox(1,1){}}
    \put(3,1){\framebox(1,1){}}
    \put(4,2){\framebox(1,1){3g-5}}
    \put(4,3){\framebox(1,1){3g-4}}
    \put(4,4){\framebox(1,1){3g-3}}
    \put(5,2){\framebox(1,1){3g-2}}
    \put(6,2){\framebox(1,1){3g-1}}
    \put(7,2){\framebox(1,1){\ldots}}
    \put(8,2){\framebox(1,1){n}}
    \put(0,2){\circle*{.2}}
    \put(0,3){\circle*{.2}}
    \put(0,5){\circle*{.2}}
    \put(1,2){\circle*{.2}}
    \put(1,3){\circle*{.2}}
    \put(1,5){\circle*{.2}}
    \put(1,0){\circle*{.2}}
    \put(2,0){\circle*{.2}}
    \put(2,2){\circle*{.2}}
    \put(2,3){\circle*{.2}}
    \put(2,5){\circle*{.2}}
    \put(3,0){\circle*{.2}}
    \put(3,2){\circle*{.2}}
    \put(3,3){\circle*{.2}}
    \put(3,5){\circle*{.2}}
    \put(4,0){\circle*{.2}}
    \put(4,2){\circle*{.2}}
    \put(4,3){\circle*{.2}}
    \put(4,5){\circle*{.2}}
    \put(5,2){\circle*{.2}}
    \put(5,3){\circle*{.2}}
    \put(5,5){\circle*{.2}}
    \put(9,2){\circle*{.2}}
    \put(9,3){\circle*{.2}}
  \end{picture}\\[3mm]
\refstepcounter{diagramm}{\it Figure \arabic{diagramm}}:\label{ogn}
{\em The origami $O_{g,n}$. Opposite edges are glued}
\end{figure}
\end{center}

We directly read off from Figure~\ref{ogn}
that $O_{g,n}$ has 1 singularity. The corresponding
vertices are shown as marked points in the figure.
In addition we have  
$n - 2g + 1$ regular vertices.
From the Euler characteristic formula
we obtain the genus. One further sees from
Figure~\ref{ogn} that the saddle connections
span $\ZZ^2$.

\begin{remark}\label{cuspsizepairs1}
The origami $O_{g,n}$ is primitive, has genus $g$ and 
lies in the stratum $\omegam{g}(2g-2)$. It has $g$ maximal
horizontal
cylinders: one of circumference $n-2g+2$ and height $1$
and $g-1$ cylinders of circumference $1$ and height $2$.
It further has $g$ maximal vertical cylinders:
$g-1$ cylinders with circumference $3$ and height $1$ and one
cylinder with circumference $1$ and height $n-(3g-3)$.
It follows that the Veech group $\Gamma(O_{g,n})$ contains 
\[\bpm 1&n-2g+2\\ 0&1 \epm \mbox{ and } \bpm 1&0\\3&1 \epm\]
and the normalised cusp-width pair of $O_{g,n}$ is $(a,b)$ with 
$a$ divides $n-2g+2$ and $b$ divides $3$.  
\end{remark}

We now show that there is an origami $A\cdot O_{g,n}$ ($A \in \slzwei{\ZZ}$)
in the same orbit which has only one cylinder 
with respect to the horizontal as well as with respect to the vertical direction.
Recall from Section~\ref{slzweiz-action} that if an origami $O$ is given by a permutation 
$(\sigma_a,\sigma_b)$, then $T^{-1}\cdot O$  and $T'^{-1}\cdot O$
are given by
the pairs 
\begin{equation}\label{trafos}
(\sigma_a,\sigma_b\sigma_a) \mbox{ and }  
(\sigma_b\sigma_a,\sigma_b) \mbox{ , respectively.}
\end{equation}
Applying these two transformations  and their inverse
consecutively, we obtain the origamis in the orbit of $O$.

\begin{lemma}
Let $A = T^{-1}{T'}^{-1}$ and let $O_{g,n}$ be the origami from Definition~\ref{defogn}. 
If $g \geq 3$ and $n \geq 3g-2$, then for the origami $A\cdot O_{g,n}$ the horizontal and
the vertical directions are both one-cylinder directions.
\end{lemma}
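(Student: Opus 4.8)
The plan is to compute the permutation pair of $A\cdot O_{g,n}$ explicitly, using the transformation rules~\eqref{trafos}, and then to verify that both resulting permutations are single $n$-cycles. Since $A = T^{-1}T'^{-1}$ we have $A\cdot O_{g,n} = T^{-1}\cdot(T'^{-1}\cdot O_{g,n})$, so by~\eqref{trafos} the origami $T'^{-1}\cdot O_{g,n}$ is given by $(\sigma_b\sigma_a,\sigma_b)$, and then $T^{-1}$ acts on this pair $(\tau_a,\tau_b) = (\sigma_b\sigma_a,\sigma_b)$ to produce $(\tau_a,\tau_b\tau_a) = (\sigma_b\sigma_a,\,\sigma_b\sigma_b\sigma_a) = (\sigma_b\sigma_a,\,\sigma_b^2\sigma_a)$. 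So the claim reduces to showing that both $\sigma_b\sigma_a$ and $\sigma_b^2\sigma_a$ are $n$-cycles, i.e. act transitively with a single orbit on $\{1,\ldots,n\}$. The horizontal cylinder count of an origami with permutation pair $(\rho_a,\rho_b)$ is the number of cycles of $\rho_a$, so ``one horizontal cylinder'' means $\rho_a = \sigma_b\sigma_a$ is an $n$-cycle, and ``one vertical cylinder'' means $\rho_b = \sigma_b^2\sigma_a$ is an $n$-cycle.

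First I would write out $\sigma_b\sigma_a$ as a function using the formulas~\eqref{functions}. For $x = n$ we get $\sigma_a(n)=1$, then $\sigma_b(1)=2$. For $3g-2\le x\le n-1$ we get $\sigma_a(x)=x+1$, and since $x+1\ge 3g-1\ge 3g-2$ we have $\sigma_b(x+1)=x+1$, so $x\mapsto x+1$ on this range. For $x\le 3g-3$: if $x\equiv 1\bmod 3$ then $\sigma_a(x)=x+3$ (when $x\le 3g-5$) and $x+3\equiv 1\bmod 3$, so $\sigma_b(x+3)=x+4$; if $x\equiv 2\bmod 3$ then $\sigma_a(x)=x$ and $\sigma_b(x)=x+1$; if $x\equiv 0\bmod 3$ then $\sigma_a(x)=x$ and $\sigma_b(x)=x-2$. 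The boundary case $x=3g-2$ (which is $\equiv 1\bmod 3$) falls under the ``$3g-2\le x\le n-1$'' branch giving $3g-2\mapsto 3g-1$. So I would trace the orbit of, say, $1$: following the rules one walks $1\to 4\to 7\to\cdots$ through the residue-$1$ classes up to $3g-5\to 3g-2\to 3g-1\to\cdots\to n\to 2\to 3\to 1$... wait, $\sigma_b\sigma_a(2)$: $2\equiv 2$, so $2\mapsto 3$; $3\equiv 0$, so $3\mapsto 1$. Hmm, that closes too early. I would instead carefully check: starting at $1$, after reaching $n$ we go to $2$, then $3$, then $\sigma_b\sigma_a(3)=1$. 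That would be a cycle of length $< n$ unless the residue-$2$ and residue-$0$ elements are interleaved correctly — I expect the actual orbit threads $1\to4\to\cdots\to3g-5\to 3g-2\to\cdots\to n\to 2\to 3\to 1$ is wrong and the true cycle visits the middle blocks differently; the point is that the explicit function must be recomputed with care and the single-cycle structure read off, perhaps by checking that $\sigma_b\sigma_a$ sends $3k-1\mapsto 3k$, $3k\mapsto 3k-2$, $3k-2\mapsto 3k+1$ (for $3k-2\le 3g-5$), threading all of $1,\ldots,3g-3$ plus the tail $3g-2,\ldots,n$ into one $n$-cycle.

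Then I would repeat the computation for $\sigma_b^2\sigma_a$, noting $\sigma_b^2$ is easy to describe from~\eqref{functions}: it fixes everything $\ge 3g-2$, and on each triple $\{3k-2,3k-1,3k\}$ with $3k\le 3g-3$ it acts as $3k-2\mapsto 3k$, $3k-1\mapsto 3k-2$, $3k\mapsto 3k-1$ (the square of a $3$-cycle). Composing with $\sigma_a$ and tracing the orbit of $1$ the same way should again yield a single $n$-cycle, with the hypotheses $g\ge 3$ and $n\ge 3g-2$ ensuring that the index ranges don't degenerate (e.g. that there is at least one ``triple'' block and the tail $3g-2,\ldots,n$ is nonempty). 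The main obstacle is purely bookkeeping: one must handle the boundary indices $3g-5$, $3g-3$, $3g-2$ and $n$ correctly and avoid off-by-one errors where the residue-class threading meets the linear tail; once the two composite permutations are written as explicit functions, verifying transitivity with one cycle is a finite, mechanical walk through $\{1,\ldots,n\}$, and the statement follows since number of cylinders equals number of cycles of the relevant permutation (Section~\ref{cylinders}).
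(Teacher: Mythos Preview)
Your approach is exactly the paper's: derive that $A\cdot O_{g,n}$ has permutation pair $(\sigma_b\sigma_a,\sigma_b^2\sigma_a)$, compute each composite explicitly from~\eqref{functions}, and check that both are single $n$-cycles. The confusion in your orbit trace is just an arithmetic slip: you correctly derived that $x\equiv 1\bmod 3$ (with $x\le 3g-8$) gives $x\mapsto x+4$, but then wrote $1\to 4$ (that is $\sigma_a$ alone). The actual orbit under $\sigma_b\sigma_a$ starts $1\to 5\to 6\to 4\to 8\to 9\to 7\to 11\to\cdots$, threading each triple $\{3k-2,3k-1,3k\}$ via $3k-1\mapsto 3k$, $3k\mapsto 3k-2$, $3k-2\mapsto 3k+2$ (your guess $3k-2\mapsto 3k+1$ is off by one), until $3g-5\mapsto 3g-2$, then linearly through the tail to $n\mapsto 2\mapsto 3\mapsto 1$; this is visibly one $n$-cycle. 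The paper carries out the same computation for $\sigma_b^2\sigma_a$ and finds the cycle $(1,6,5,4,9,8,\ldots,3g-5,3g-2,\ldots,n,3,2)$. With those corrections your sketch is complete and matches the paper line for line.
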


\begin{proof}
Let $(\sigma_a, \sigma_b)$ be the pair of permutations
from Definition~\ref{defogn}  describing $O_{g,n}$.
By (\ref{trafos}) we have that $A\cdot O_{g,n}$ is given by the
permutations $(\sigma_b \circ \sigma_a, \sigma_b^2 \circ \sigma_a)$.
We now calculate these permutations. From~(\ref{functions})
we obtain:
\[
    \sigma_b(\sigma_a(x)) =
    \left\{
    \begin{array}{ll}
      x+4, &x \equiv 1 \mod 3 \mbox{ and } x \leq 3g-8\\
      x+1,   &x \equiv 2 \mod 3 \mbox{ and } x \leq 3g-4\\
      x-2,   &x \equiv 0 \mod 3 \mbox{ and } x \leq 3g-3\\
      3g-2 = x+3, &x = 3g-5\\
      x+1, &3g-2 \leq x \leq n-1\\  
      2,   &x=n
    \end{array}
    \right.\\\vspace*{2mm}
\]
Written as a permutation this is:
\[
\begin{array}{lcl}
  \sigma_b\sigma_a &=& (1,5,6,4,8,9,7,11,12,\ldots, 3g-8,3g-4,3g-3,\\
    && \phantom{(\;} 3g-5,3g-2,3g-1,3g,3g+1, \ldots n-1,n,2,3)
\end{array}
\]
In particular, this is an $n$-cycle. Furthermore, we have:
\[
\sigma_b^2(x) = 
\left\{
\begin{array}{ll}
  x+2, &x \equiv 1 \mod 3 \mbox{ and } x \leq 3g-3\\
  x-1,   &x \not\equiv 1 \mod 3 \mbox{ and } x \leq 3g-3\\
  x, & x \geq 3g-2
\end{array}
\right.
\]
We then get:
\[
\sigma_b^2(\sigma_a(x)) =
\left\{
\begin{array}{ll}
  x+5, &x \equiv 1 \mod 3 \mbox{ and } x \leq 3g-8\\
  x-1,   &x \not\equiv 1 \mod 3 \mbox{ and } x \leq 3g-3\\
  3g-2 = x+3, &x = 3g-5\\
  x+1, &3g-2 \leq x \leq n-1\\  
  3,   &x=n
\end{array}
\right.\\\vspace*{2mm}
\]
Written as a permutation this is:
\[
\begin{array}{lcl}
  \sigma_b^2\sigma_a &=& (1,6,5,4,9,8,\ldots,3g-8,3g-3,3g-4,\\
     && \phantom{(\;} 3g-5,3g-2,3g-1,3g, \ldots, n,3,2)
\end{array}
\]
In particular this is again an $n$-cycle.
This finishes the proof.
\end{proof}

We immediately obtain the following corollary.

\begin{cor}\label{cuspsizepairs2}
The Veech group $\Gamma(A\cdot O_{g,n})$ contains $T^n$
and $T'^{n}$. Thus for its normalised cusp-width pair $(a',b')$
we have that $a'$ and $b'$ divide $n$.
\end{cor}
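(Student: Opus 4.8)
The plan is to deduce both parabolic elements directly from the one‑cylinder property established in the preceding lemma, and then to translate membership in the Veech group into the two divisibility statements via the definition of the width of a cusp.

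First I would observe that $A\cdot O_{g,n}$ again has $n$ squares, and that by the preceding lemma it is described by the pair $(\sigma_b\sigma_a,\,\sigma_b^2\sigma_a)$, where both permutations are $n$-cycles. Hence the horizontal direction is a one-cylinder direction: the surface consists of a single maximal horizontal cylinder of height $1$ and circumference $n$, so the least common multiple of the cycle lengths of $\sigma_a^{A\cdot O_{g,n}}=\sigma_b\sigma_a$ equals $n$. By the description of parabolic elements in the Veech group of an origami (Section~\ref{cylinders}), this yields $T^n\in\Gamma(A\cdot O_{g,n})$. Applying the same reasoning in the vertical direction, using that $\sigma_b^{A\cdot O_{g,n}}=\sigma_b^2\sigma_a$ is an $n$-cycle, gives $T'^{\,n}\in\Gamma(A\cdot O_{g,n})$. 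This settles the first assertion.

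For the claim about the normalised cusp-width pair $(a',b')$ I would argue as follows. By the definition recalled in Section~\ref{preps}, the width $a'$ is the absolute value of the exponent of the generator $T^{a'}$ of the (maximal cyclic parabolic) subgroup of $\Gamma(A\cdot O_{g,n})$ fixing the cusp $\infty$; equivalently $a'\ZZ$ is the group of all integers $m$ with $\pm T^m\in\Gamma(A\cdot O_{g,n})$. Since $T^n\in\Gamma(A\cdot O_{g,n})$ fixes $\infty$, it lies in this subgroup, hence is a power of $T^{a'}$, so $a'\mid n$. The cusp $0$ is the image of $\infty$ under $S$, so its stabiliser in $\Gamma(A\cdot O_{g,n})$ is, up to $\pm I$, generated by a conjugate of $T^{b'}$, concretely by $T'^{\,b'}$; since $T'^{\,n}\in\Gamma(A\cdot O_{g,n})$ fixes $0$, the same argument gives $b'\mid n$.

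I do not anticipate a genuine obstacle here. The one nontrivial ingredient --- that both cylinder decompositions of $A\cdot O_{g,n}$ are one-cylinder decompositions --- is exactly what the preceding lemma supplies, and everything else is the standard dictionary between cylinder decompositions and parabolic elements (Section~\ref{cylinders}) together with the defining property of cusp widths (Section~\ref{preps}). The only point that calls for a little care is the bookkeeping with a possible $-I\in\Gamma(A\cdot O_{g,n})$ when identifying $a'$ (resp.\ $b'$) with the generator of the relevant group of exponents; phrasing this with ``$\pm T^m$'' (resp.\ ``$\pm T'^{\,m}$''), as above, removes the ambiguity without affecting the conclusion.
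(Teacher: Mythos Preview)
Your argument is correct and is exactly the reasoning the paper intends: the corollary is stated without proof as an immediate consequence of the preceding lemma, and you have simply spelled out the standard dictionary between one-cylinder decompositions, parabolic elements, and cusp widths from Sections~\ref{preps} and~\ref{cylinders}.
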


We now can conclude Theorem~\ref{onezero} 
from the following proposition.

\begin{proposition}
If $n$ is coprime to $3$ and coprime to $2g-2$, then
the Veech group $\Gamma(O_{g,n})$ is a totally non
congruence group.
\end{proposition}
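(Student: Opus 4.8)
The idea is to apply Theorem~\ref{enough_parabolics} to the conjugate pair $\Gamma_1 = \Gamma(O_{g,n})$ and $\Gamma_2 = \Gamma(A\cdot O_{g,n})$ with $A = T^{-1}T'^{-1}$, using the cusp-width data collected in Remark~\ref{cuspsizepairs1} and Corollary~\ref{cuspsizepairs2}. Recall $\Gamma_1$ has normalised cusp-width pair $(a,b)$ with $a \mid n-2g+2$ and $b \mid 3$, so $n_1 = \lcm(a,b)$ divides $3(n-2g+2) = 3(n-(2g-2))$; and $\Gamma_2$ has normalised cusp-width pair $(a',b')$ with $a',b' \mid n$, so $n_2 = \lcm(a',b')$ divides $n$. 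To invoke Theorem~\ref{enough_parabolics} I only need to show $\gcd(n_1,n_2) = 1$.

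\textbf{Key steps.}
First I would verify that the hypotheses $g \geq 3$ and $n \geq 3g-2$ place us in the regime where the preceding lemma and corollary apply, so that $\Gamma_2 = \Gamma(A\cdot O_{g,n})$ indeed has the claimed cusp-width pair. Second, the arithmetic core: since $n_1 \mid 3(n-(2g-2))$ and $n_2 \mid n$, it suffices to show $\gcd(3(n-(2g-2)), n) = 1$. Now $\gcd(n, n-(2g-2)) = \gcd(n, 2g-2)$, which equals $1$ by the coprimality hypothesis; and $\gcd(n,3) = 1$ since $n$ is coprime to $3$. Combining, any prime $p \mid \gcd(3(n-(2g-2)), n)$ would have to divide $n$ and divide either $3$ or $n-(2g-2)$, hence divide either $\gcd(n,3) = 1$ or $\gcd(n, 2g-2) = 1$, a contradiction. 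Therefore $n_1$ and $n_2$ are relatively prime. Third, apply Theorem~\ref{enough_parabolics}: it follows immediately that $\Gamma(O_{g,n})$ (and $\Gamma(A\cdot O_{g,n})$) surjects onto $\slzwei{\ZZ/m\ZZ}$ for every $m \in \NN$, i.e. the level index $e_{\Gamma,l}$ is $1$, so $\Gamma(O_{g,n})$ is a totally non-congruence group.

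\textbf{Main obstacle.}
There is no serious analytic or structural obstacle here; the proposition is essentially a corollary of Theorem~\ref{enough_parabolics} once the geometric input (one-cylinder directions for a conjugate origami, giving the parabolic elements $T^n$ and $T'^{n}$) has been assembled in the previous lemma and corollary. The only point requiring a little care is bookkeeping the divisibilities: one must track that $b$ divides $3$ rather than equals $3$, so that $n_1 \mid 3(n-(2g-2))$ rather than equalling it, and then check the gcd statement is robust to this. I would also note in passing that if $g-1$ (equivalently $2g-2$) happened to share a factor with $n$, or $3 \mid n$, the argument via Theorem~\ref{enough_parabolics} breaks down, which is exactly why the coprimality hypothesis is imposed; one could then only hope for a bounded-level-index statement as in Theorem~\ref{stratumlab}~ii), via Lemma~\ref{contains}, but that is not claimed here. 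Finally, Theorem~\ref{onezero} follows since for each $g \geq 3$ there are infinitely many $n \geq 3g-2$ coprime to $6(g-1)$.
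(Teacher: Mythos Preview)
Your proposal is correct and follows essentially the same approach as the paper: both apply Theorem~\ref{enough_parabolics} to the conjugate pair $\Gamma_1=\Gamma(O_{g,n})$ and $\Gamma_2=\Gamma(A\cdot O_{g,n})$, using the cusp-width divisibilities from Remark~\ref{cuspsizepairs1} and Corollary~\ref{cuspsizepairs2}, and then check that $\lcm(n-2g+2,3)$ (or its multiple $3(n-2g+2)$, as you use) is coprime to $n$ under the stated hypotheses. The only difference is that you spell out the gcd computation explicitly, whereas the paper simply asserts coprimality follows from the assumptions.
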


\begin{proof}
Recall that $\Gamma(O_{g,n})$ and $\Gamma(A\cdot O_{g,n})$
are conjugated. More precisely we have that $\Gamma(A\cdot O_{g,n})  
= A \Gamma(O_{g,n}) A^{-1}$. We can
apply Theorem~\ref{enough_parabolics} for $\Gamma_1 = \Gamma(O_{g,n})$ 
and $\Gamma_2 = \Gamma(A\cdot O_{g,n})$, since by Remark~\ref{cuspsizepairs1} 
and Corollary~\ref{cuspsizepairs2} the least common multiple of the
normalised cusp-width pair 
of $\Gamma_1$ divides  $\lcm(n-2g+2,3)$ and the least common multiple of the
normalised cusp-width pair of $\Gamma_2$ divides $n$. By the assumptions 
of this proposition it follows that they are coprime.
\end{proof}

\bibliographystyle{amsalpha}
\bibliography{cg}

\end{document}